\documentclass[a4paper]{article}
\usepackage{amsmath}
\usepackage{amsfonts,amssymb}
\usepackage{amsfonts,amssymb,amsthm}
\usepackage[final]{graphicx}

\pagestyle{headings}

\newtheorem{thm}{Theorem}

\newtheorem{lem}{Lemma}
\newtheorem{prop}{Proposition}
\newtheorem{rem}{Remark}
\newtheorem{df}{Definition}

\newcommand{\RM}{\mathbb{R}}
\newcommand{\ZM}{\mathbb{Z}}

\newcommand{\CM}{\mathbb{C}}
\newcommand{\MM}{\,\mbox{\bf M}}
\newcommand{\WM}{\,\mbox{\bf W}}

\newcommand{\HM}{\,\mbox{\bf H}}
\newcommand{\UM}{\,\mbox{\bf U}}

\newcommand{\IM}{\;\mbox{\bf I}}
\newcommand{\tr}{\,\mbox{\rm tr}}

\newcommand{\cof}{\operatorname{cof}}

\newcommand{\cn}{\operatorname{cn}}
\newcommand{\dn}{\operatorname{dn}}

\newcommand{\spec}{\operatorname{spec}}
\newcommand{\sech}{\operatorname{sech}}

%\title{Modulational and Finite Wavelength Instabilities for the Generalized Benjamin-Bona-Mahony Equation}
\title{On the Stability of Periodic Solutions of the Generalized Benjamin-Bona-Mahony Equation}
\author{Mathew A. Johnson\thanks{Department of Mathematics, Indiana University, Bloomington, IN 47405 U.S.A.}}

%\institute{University of Illinois Department of Mathematics\email{jared@math.uiuc.edu}}

\setlength{\oddsidemargin}{0 cm}

\setlength{\evensidemargin}{0cm}

\setlength{\footskip}{2 cm}

\setlength{\textheight}{22 cm}

\setlength{\textwidth}{16 cm}

\begin{document}
\bibliographystyle{plain}
\maketitle
%\begin{center}
%Department of Mathematics\\
%Indiana University\\
%831 East Third St.\\
%Bloomington, IN 47405 USA\\
%matjohn@indiana.edu
%\end{center}

\begin{abstract}
We study the stability of a four parameter family of spatially periodic traveling wave solutions of the generalized
Benjamin-Bona-Mahony equation to two classes of perturbations: periodic perturbations with the same periodic
structure as the underlying wave, and long-wavelength localized perturbations.
In particular, we derive necessary conditions for spectral instability to perturbations to both
classes of perturbations by deriving appropriate asymptotic expansions of the
periodic Evans function, and we outline a nonlinear stability theory to periodic perturbations
based on variational methods which effectively extends our periodic spectral stability results.
\end{abstract}

%\begin{keywords}
%Generalized Benjamin-Bona-Mahony equation; periodic waves; modulational instability; nonlinear stability.
%\end{keywords}

%\begin{AMS}
%15A15, 15A09, 15A23
%\end{AMS}

\pagestyle{myheadings}
\thispagestyle{plain}

%\tableofcontents

\markboth{Mathew A. Johnson}{Stability of Periodic gBBM Waves}
\section{Introduction}

In this paper, we consider the generalized Benjamin-Bona-Mahony (gBBM) equation
\begin{equation}
u_t-u_{xxt}+u_x+\left(f(u)\right)_{x}=0,\label{gbbm}
\end{equation}
%and the generalized Camassa-Holm equation
%\begin{equation}
%u_t-u_{xxt}=2u_xu_{xx}+uu_{xxx}-\left(f(u)/2\right)_x-k u_x,\label{gch}
%\end{equation}
where $x\in\RM$, $t\in\RM$, and $f(\cdot)\in C^2(\RM)$ is a prescribed nonlinearity.  In particular,
we will be most interested in the case of a power-law nonlinearity $f(u)=u^{p+1}$ as this is when
our results are most explicit.  Notice that when $f(u)=u^2$, \eqref{gbbm} is precisely the
Benjamin-Bona-Mahony equation (BBM), or the regularized long-wave equation, which arises
as an alternative model to the well known Korteweg-de Vries equation (KdV)
\[
u_t-u_{xxx}+u u_x=0
\]
as a description of gravity water waves in the long-wave regime (see \cite{BBM,DHP}).  In applications,
various other nonlinearities arise which facilitates our consideration of the
generalized BBM equation.  For suitable nonlinearities, equation \eqref{gbbm}
admits traveling wave solutions of the form $u(x,t)=u(x-ct)$ with wave speed $c>1$ which are
are either periodic or asymptotically constant.  Solutions which are asymptotically constant
are known as the solitary waves, and they correspond to either homoclinic or heteroclinic
orbits of the traveling wave ODE
\begin{equation}
cu_{xxt}-(c-1)u_x+\left(f(u)\right)_x=0\label{travelode}
\end{equation}
obtained from substituting the traveling wave ansatz into \eqref{gbbm}.  The stability of such
solutions to localized, i.e. $L^2(\RM)$, perturbations is well known \cite{BS,PW1,MW1}: the solitary waves form a one
parameter family of traveling wave solutions of the gBBM which can be indexed by the wave speed $c>1$.
A given solitary wave solution $u_{c_0}(x-c_0t)$ of the  is nonlinearly (orbitally) stable if the
so called momentum functional
\[
\mathcal{N}(c):= \frac{1}{2}\int_\RM \left(u_c(x)^2+u_c'(x)^2\right)dx
\]
is an increasing function at $c_0$, i.e. if $\frac{d}{dc}\mathcal{N}(c)|_{c=c_0}>0$, and is exponentially
unstable if $\frac{d}{dc}\mathcal{N}(c)|_{c=c_0}$ is negative.  In the special case of a power-nonlinearity
$f(u)=u^{p+1}$, it follows that such waves are nonlinearly stable if $1\leq p\leq 4$, while for $p>4$ there
exists a critical wavespeed $c(p)$ such that the waves are nonlinearly unstable for $1< c<c(p)$ and
stable for $c>c(p)$.

In \cite{PW1}, Pego and Weinstein found the mechanism for the instability of the solitary waves
to be as follows: linearizing the traveling gBBM equation
\begin{equation}
u_t-u_{xxt}+cu_{xxx}-(c-1)u_x+f(u)_x=0\label{travelgbbm}
\end{equation}
about a given solitary wave solution $u_{c}$ of \eqref{travelode}
and taking the Laplace transform
in time yields a spectral problem of the form $\partial_x\mathcal{L}[u_{c}]v=\mu v$ considered on the
real Hilbert space $L^2(\RM)$.  The authors then make a detailed analysis of the Evans function $D(\mu)$, which
plays the role of a transmission coefficient familiar from quantum scattering theory: in particular, $D(\mu)$
measures intersections of the unstable manifold as $x\to-\infty$ and the stable manifold as $x\to+\infty$
of the traveling wave ODE.  As a result, if Re$(\mu)>0$ and $D(\mu)=0$, it follows that the spectral
problem $\partial_x\mathcal{L}[u_{c}]v=\mu v$ has a non-trivial $L^2(\RM)$ solution, and hence $\mu$ belongs
to the point spectrum of the linearized operator\footnote{By a standard argument, the essential spectrum
can be shown to lie on the imaginary axis, and hence any spectral instability must come from the discrete spectrum.}.
Using this machinery, Pego and Weinstein were able to prove that ${\rm sign}(D(\mu))=+1$ as $\mu\to+\infty$ and
\[
D(\mu)=C\left(\frac{d}{dc}\mathcal{N}(\omega)\big{|}_{\omega=c}\right)\mu^2+\mathcal{O}(|\mu|^3)
\]
for $|\mu|\ll 1$ and some constant $C>0$.  Thus, if $\mathcal{N}(c)$ is decreasing at $c$, then by continuity there must exist
a real $\mu^*>0$ such that $D(\mu^*)=0$, which proves exponential instability of the underlying traveling wave.

The focus of the present work concerns the stability properties of spatially periodic traveling wave solutions
of \eqref{gbbm} and in contrast to its solitary wave counterpart, relatively little is known in this context.
% about the stability of periodic traveling wave solutions of \eqref{gbbm}.
Most results in the periodic case falls into one of the following
two categories: spectral stability with respect to localized or bounded perturbations
\cite{BD,BrJ,HK,Haragus}, and nonlinear (orbital) stability
with respect to periodic perturbations\cite{AP,APBS,BrJK,DK,MJ1}.  The spectral stability results rely on a detailed analysis of the
spectrum of the linearized operator in a given Hilbert space representing
the class of admissible perturbations: in our case, we will consider
both $L^2(\RM)$, corresponding to localized perturbations, and $L^2_{\rm per}([0,T])$,
corresponding to $T$-periodic perturbations where $T$ is the period of the underlying wave.
Notice that unstable spectrum in $L^2_{\rm per}([0,T])$ represents a high-frequency instability and hence manifests itself
in the short time instability (local well-posedness) of the underlying solution, while
unstable spectrum near the origin corresponds to instability to long wavelength (low-frequency) perturbations, or slow modulations,
and hence manifests itself in the long time instability (global well-posedness) of the underlying
solution.  Notice that on a mathematical level, the origin in the spectral
plane is distinguished by the fact that the traveling wave ordinary differential equation
\eqref{travelode} is completely integrable.  Thus, the tangent space to the manifold of periodic traveling wave solutions
can be explicitly computed, and the null space of the linearized operator can be built
up out of a basis for this tangent space.

Our analysis of the linearized spectral problem on $L^2_{\rm per}([0,T])$ parallels that
of the solitary wave theory described above, in that we compare the large real $\mu$ behavior of the corresponding
periodic Evans function to the local behavior near the origin, thus deriving a sufficient condition
for instability.  The stability analysis to arbitrary localized perturbations is more delicate and
follows the general modulational theory techniques of Bronski and Johnson \cite{BrJ}: unlike the
solitary wave case, the spectrum of the linearized operator about a periodic wave has purely continuous $L^2$ spectrum
and hence any spectral instability must come from the essential spectrum.  As a result, there are relatively few
results in this case.  In the well known work of Gardner \cite{Gard}, it is shown that periodic
traveling wave solutions of \eqref{gbbm} of sufficiently long wavelength are exponentially unstable whenever
the limiting homoclinic orbit (solitary wave) is unstable.  The mechanism behind this instability is the existence
of a ``loop" of spectrum in the neighborhood of any unstable eigenvalue of the limiting solitary wave.  More recently,
H\v{a}r\v{a}gu\c{s} carried out a detailed spectral stability analysis in the case
of power-nonlinearity $f(u)=u^{p+1}/(p+1)$ for waves sufficiently close to the constant state $u=\left((p+1)(c-1)\right)^{1/p}$
determining when the $L^2$ spectrum of the linearized operator is confined to the imaginary axis.  We, on the other hand,
consider arbitrary periodic traveling waves with essentially arbitrary nonlinearity: as a result of this
level of generality we are unable to make conclusive spectral stability statements but instead can
only determine $L^2$-spectral stability near the origin: this is determined by an asymptotic analysis
of the periodic Evans function near the origin resulting in a modulational instability index which determines
the local normal form of the spectrum.  However, we find that this analysis yields
quite a bit of information about the spectrum of the underlying wave.

The orbital stability results rely on the now familiar energy
functional techniques or Grillakis, Shatah, and Strauss \cite{GSS}
used in the study of solitary type solutions.  %When studying the stability of solitary
%waves, this nearly completely answers the question of stability, while in the periodic case it is clear that
%such methods give rise to only necessary conditions for stability.
This program has recently been carried out by Johnson \cite{MJ1} in the case of periodic traveling
waves of the generalized Korteweg-de Vries equation.  The corresponding theory for the gBBM equation
is nearly identical to the analysis in \cite{MJ1}, and hence will only be outlined in this work.

Finally, we study the behavior of the above indices in a long-wavelength limit in the case
of a power-nonlinearity $f(u)=u^{p+1}/(p+1)$.  For such non-linearities, we gain an additional
scaling in the wave speed which allows explicit calculations of the leading order
terms in this stability index.  A naive guess would be that the value
of the orientation index would converge to the solitary wave stability index.  This is not the case,
however, since the convergence of long-wavelength periodic waves to solitary waves is non-uniform, implying
such a limit is highly singular.  What is true is that the {\it sign} of the finite-wavelength instability
index converges to the {\it sign} of the solitary wave stability index.  Since this index was derived from
an orientation index calculation, only its sign matters and thus we show that this stability index
is in some sense the correct generalization of the stability index used in the study of solitary waves of \eqref{gbbm}.
Moreover, the sign of the modulational stability index converges to the same sign as the solitary wave index,
implying that periodic waves of \eqref{gbbm} in a neighborhood of a solitary wave are modulationally unstable
if and only if the nearby solitary wave unstable.  Notice this does not follow directly from the work of Gardner
described above: there it is proved that there exists a ``loop" of spectrum in the neighborhood of the origin which is the
{it continuous} image of a circle, but to our knowledge it has never been proved that this map is injective.

The outline for this paper is as follows.  In section 2 we review the basic properties of the periodic
traveling wave solutions of \eqref{gbbm}, and in section 3 we review the basic properties of the
periodic Evans function utilized throughout this work.  In section 4, we begin our analysis by considering
stability of the $T$-periodic traveling wave to $T$-periodic perturbations.  In particular, we first
determine the orientation index, which provides sufficient information for spectral instability
to such perturbations, and then discuss how this index plays into the nonlinear stability theory.
In section 5, we conduct our modulational instability analysis.  In section 6 we analyze
the results of sections 4 and 5 in a solitary wave limit, thus extending
the well known results of Garnder.  Finally, we close in section 7 with a brief discussion and closing remarks.

\section{Properties of the Periodic Traveling Waves}

In this section, we describe the basic properties of the periodic traveling waves of the gBBM equation \eqref{gbbm}.
For each $c>1$, a traveling wave is a solution of the traveling wave ODE \eqref{travelode},
i.e. they are stationary solutions of \eqref{gbbm} in a moving coordinate frame defined by $x-ct$.
Clearly \eqref{travelode} defines a Hamiltonian ODE and can be reduced to quadrature: in particular, the
traveling waves satisfy the relations
\begin{align}
cu_{xx}-(c-1)u+f(u)&=a\nonumber\\
\frac{c}{2}u_{x}^2-\left(\frac{c-1}{2}\right)u^2+F(u) &= au+E\label{quad2}
\end{align}
where $a$ and $E$ are real constants of integration and $F'=f$ with $F(0)=0$.
Notice that the solitary waves correspond to $a=0$ and $E$ fixed by the asymptotic values of the solution.
However, the parameters $a$ and $E$ are free parameters: we must only require
that the effective potential
\[
V(u;a,c):=F(u)-\frac{c-1}{2}u^2-au
\]
have a non-degenerate local minimum (see Figure \ref{phasespace}).  Note that this places
restrictions on the allowable parameter regime for our problem: we will always assume we are in the interior
of this region, and that the roots $u_{\pm}$ of the equation $V(x;a,c)=E$ are simple and such that $V(x;a,c)<E$
for $x\in(u_{-},u_{+})$.  In particular, this guarantees the classical ``turning points" $u_{\pm}$ are $C^1$ functions
of the parameters $a$, $E$, and $c$.
Thus, the periodic solutions of \eqref{travelode}
form a four parameter family of solutions $u(x+x_0;a,E,c)$ while the solitary waves form
a codimension two subset.  Notice however that the translation invariance, corresponding
to the parameter $x_0$ is not essential to our theory and can be modded out.  Hence,
we consider the periodic traveling wave solutions of \eqref{gbbm} as a three parameter
family of the form $u(x;a,E,c)$.
%Throughout this paper, we will work with a $T$-periodic traveling wave solution of \eqref{gbbm}.
The partial differential equation \eqref{gbbm} has, in general, the three conserved quantities
\begin{eqnarray}
M&=&\int_0^T\left(u-u_{xx}\right)dx\nonumber\\%\label{mass}\\
P&=&\frac{1}{2}\int_0^T\left(u^2+u_x^2\right)dx\label{momentum}\\
H&=&\int_0^T\left(\frac{1}{2}u^2+F(u)\right)dx\nonumber%\label{ham}
\end{eqnarray}
which correspond to the mass, momentum, and Hamiltonian (energy) of the solution, respectively.  These three
quantities are considered as functions of the traveling wave parameters $a$, $E$, and $c$ and their
gradients with respect to these parameters will play an important role in the foregoing analysis.  It
is important to notice that when restricted to the four-parameter family of periodic traveling wave
solutions of \eqref{gbbm}, the mass can be represented as $M=\int_0^Tu\;dx$.  Since all our results
concern this four-parameter family, we will always work with this simplified expression for the mass
functional.

\begin{figure}
\centering
\includegraphics[scale=0.85]{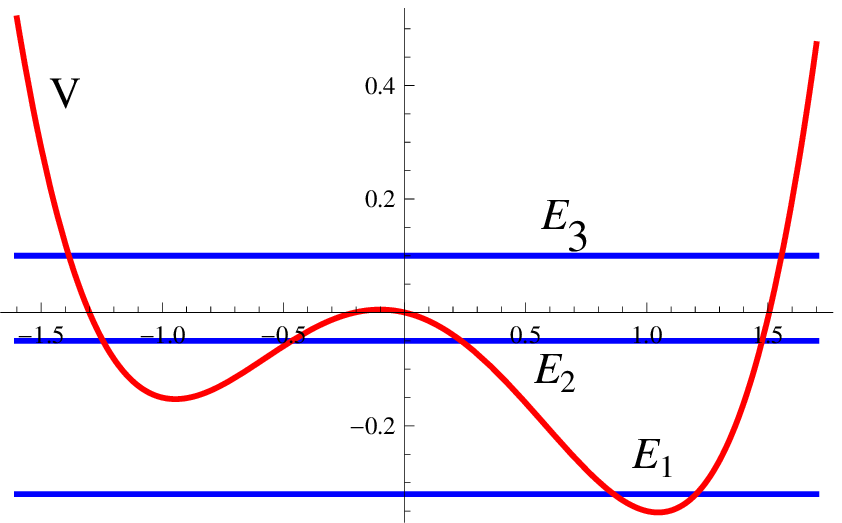}~~~~~
\includegraphics[scale=0.6]{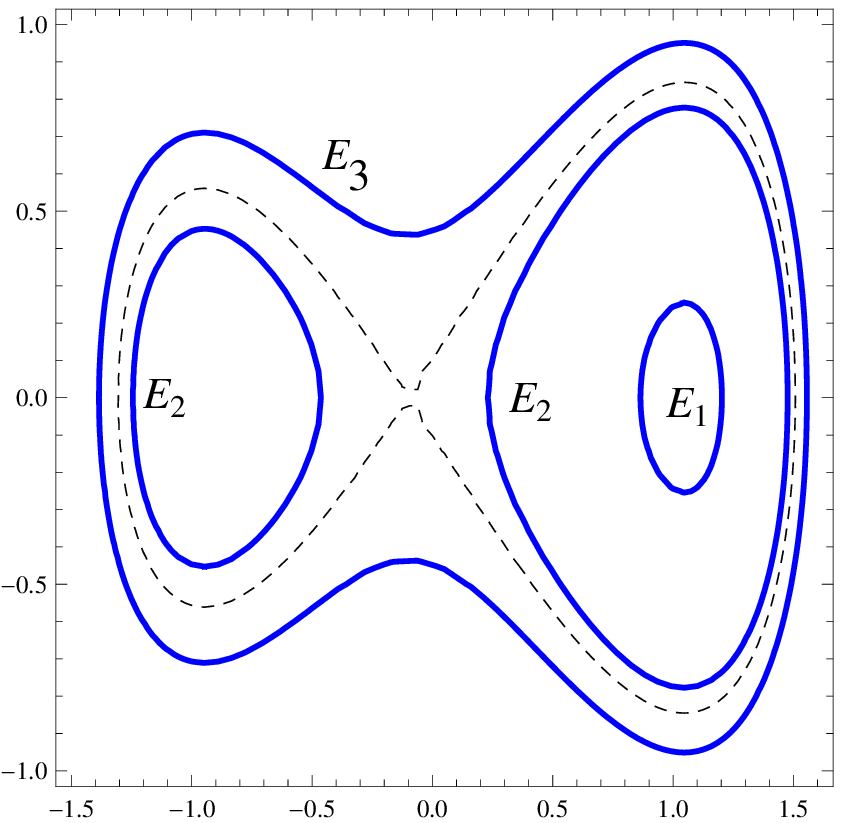}
\caption{(Left) A plot of the effective potential energy $V(x;0.1,2)$ for the modified BBM equation ($f(u)=u^3$), as well
as three energy levels $E_1=-0.32$, $E_2=-0.05$, and $E_3=0.1$. (Right) Plots in phase space
of the solutions $u(x;0.1,E_j,2)$ corresponding to the three energy levels on the left.  Notice
that those solutions corresponding to energy levels $E_1$ and $E_2$ are bounded
by a homoclinic orbit in phase space (given by the thin dashed line).  Moreover, notice that $E_2$ corresponds to two distinct
periodic traveling wave solutions: however, these can be clearly distinguished by their initial values which we have
chosen to mod out in our theory.}
\label{phasespace}
\end{figure}

%As noted above, the traveling wave solutions of \eqref{gbbm} of the form $u(x-ct)$ and easily shown to satisfy
%\[
%\frac{c}{2}u_{x}^2=E-V(x;a,c),
%\]
%where the effective potential is given by $V(x;a,c)=F(u)-\left(\frac{c-1}{2}\right)u^2-au$.
%In order to ensure the existence of periodic solutions to \eqref{travel} we must require
%that the $V(x;a,c)$ have a local minimum for each parameter choice $(a,c)$.  Note that this places
%restrictions on the allowable parameter regime for our problem: we will always assume we are in the interior
%of this region, and that the roots $u_{\pm}$ of the equation $V(x;a,c)=E$ are simple and such that $V(x;a,c)<E$
%for $x\in(u_{-},u_{+})$.  In particular, this guarantees the classical ``turning points" $u_{\pm}$ are $C^1$ functions
%of the parameters $a$, $E$, and $c$.

As is standard, one can use equation \eqref{quad2} to express the period of the periodic wave $u$ as
\begin{equation*}
T(a,E,c)=2\sqrt{c}\int_{u_{-}}^{u_{+}}\frac{du}{\sqrt{2\left(E-V(u;a,c)\right)}}.%\label{period}
\end{equation*}
The above interval can be regularized at the square root branch
points $ u_{\pm}$ by the a standard procedure (see \cite{BrJ} for example) and hence represents a $C^1$ function
of $(a,E,c)$.
%Write $E-V(u;a,c)=(r- u_{-})(u_{+}-r)Q(u)$ and consider
%the change of variables $ u= \frac{u_++u_-}{2} + \frac{u_+-u_-}{2} \sin(\theta)$.  Notice
%that $Q(u)\neq0$ on $[ u_-, u_{+}]$.  It follows that
%$du=\sqrt{(u- u_{-})(u_{+}-u)}d\theta$ and hence
%\eqref{period} can be written in a regularized form as
%\[
%T(a,E,c)=\sqrt{2c}\int_{-\frac{\pi}{2}}^{\frac{\pi}{2}}\frac{d\theta}{\sqrt{Q\left(\frac{u_++u_-}{2} + \frac{u_+-u_-}{2} \sin(\theta)\right)}}.
%\]
Similarly, the mass and momentum can be expressed as
\begin{eqnarray*}
M(a,E,c)&=&2\sqrt{c}\int_{u_{-}}^{u_{+}}\frac{u\;du}{\sqrt{2\left(E-V(u;a,c)\right)}}\\
P(a,E,c)&=&\int_{u_{-}}^{u_{+}}\left(\frac{\sqrt{c}~u^2}{\sqrt{2\left(E-V(u;a,c)\right)}}+\sqrt{\frac{2}{c}}\sqrt{\left(E-V(u;a,c)\right)}\right)du
\end{eqnarray*}
and can be regularized as above.  In particular, it follows that one can differentiate these
functionals restricted to the periodic wave $u(x;a,E,c)$ with respect to the parameters $(a,E,c)$.  The gradients
of these quantities will play an important role in the subsequent theory.  %As noted before, there is a third
%free parameter for this family which corresponds to the translation invariance.  However, this parameter
%can be modded out and does not play an important role in the theory.  Thus, for the duration of the paper we will
%fix the translation invariance by imposing the condition $u_x(0;a,E,c)=0$ for periodic traveling wave
%solutions of \eqref{gbbm}.  Thus, technically we are dealing with the stability of an equivalence
%class of periodic traveling waves, but we will never need this distinction and will not introduce further
%cumbersome notation.

It is useful to notice the following connection to the classical mechanics corresponding to
the traveling wave ODE.  The classical action in the sense of action angle variables is
given by
\begin{equation}
K(a,E,c)=\oint u_xdu=2\sqrt{\frac{2}{c}}\int_{u_{-}}^{u_{+}}\sqrt{\left(E-V(u;a,c)\right)}du.\label{action}
\end{equation}
While $K$ is not itself conserved, it does provide a useful generating function for the
conserved quantities of \eqref{gbbm}.  Indeed, it is clear the classical action satisfies the relation
\begin{eqnarray*}
\nabla_{a,E,c}K(a,E,c)=\left<\frac{1}{c}M(a,E,c),\frac{1}{c}T(a,E,c),\frac{1}{c}\left(P(a,E,c)-K(a,E,c)\right)\right>,
%K_E&=&\frac{1}{c}T\\
%K_a&=&\frac{1}{c}M\\
%K_c&=&\frac{1}{2c}P
\end{eqnarray*}
where $\nabla_{a,E,c}:=\left<\partial_a,\partial_E,\partial_c\right>$,
which immediately establishes several useful identities between the gradients of $T$, $M$, and $P$.
For example, it follows that $T_a=M_E$ and $T_c=P_E$.

Finally, we make a few notes on notation.  Throughout the forthcoming analysis, various
Jacobians of maps from the traveling wave parameters to the period and conserved quantities
of the gBBM flow will become important.  We adopt the following Possion bracket style notation
\[
\{g,h\}_{x,y} = \frac{\partial(g,h)}{\partial(x,y)}
\]
for the Jacobian determinants with the analogous notation for larger determinants
\[
\{g,h,j\}_{x,y,z}=\frac{\partial(g,h,j)}{\partial(x,y,z)}.
\]
Notice that the nonvanishing of such quantities encodes geometric information about the
underlying manifold of periodic traveling wave solutions of \eqref{gbbm}: more will be said on
this in the coming sections.

%A straight forward calculation shows the following relations hold
%\begin{eqnarray*}
%\frac{c}{2}K&=&ET-\int_{0}^TF(u(x))dx-\left(\frac{c-1}{2}\right)P-aM\\
%\frac{1}{2}K&=&H-\int_{0}^TF(u(x))dx,
%\end{eqnarray*}
%where the second quantity is the Lagrangian of the traveling wave ordinary differential equation.
%By adding these together and differentiating with respect to the variables $a,E,$ and $c$
%we obtain the following system:
%\[
%\left(\begin{array}{cccc}
%  T_a & M_a & P_a & H_a \\
%  T_E & M_E & P_E & H_E \\
%  T_c & M_c & P_c & H_c \end{array}\right)
% \left(\begin{array}{c} E \\ a \\ c-2 \\ 1\end{array}\right)
% =  \left(\begin{array}{c}\frac{1-3c}{2c}M \\ \frac{c+1}{2c}T \\ \frac{2-3c}{c}P \end{array}\right)
%\]
%In particular, this shows that the gradients of any $T,M,P$ or $H$ can be expressed as a linear
%combination of the other three and an additional factor of $T,M,$ or $P$.

\section{The Periodic Evans Function}

We now begin our stability analysis of a $T=T(a,E,c)$-periodic traveling wave $u(x;a,E,c)$ by considering a solution to the
partial differential equation \eqref{gbbm}, i.e. a stationary solution of \eqref{travelgbbm}, of the form
\[
\psi(x,t)=u(x;a,E,c)+\varepsilon v(x,t)+\mathcal{O}(\varepsilon^2)
\]
where $|\varepsilon|\ll 1$ is considered as a small perturbation parameter.  Substituting this into \eqref{gbbm} and collecting
terms at $\mathcal{O}(\varepsilon)$ yields the linearized equation $J\mathcal{L}[u]v=\mathcal{D}v_t$ where $J=\partial_x$,
$\mathcal{L}[u]=-c\partial_x^2+(c-1)-f'(u)$, and $\mathcal{D}=1-\partial_x^2$.  Since this linearized equation is autonomous
in time, we may seek separated solutions of the form $v(x,t)=e^{\mu t}v(x)$, which yields the spectral problem
\begin{equation}
J\mathcal{L}v=\mu\mathcal{D}v\label{linear}
\end{equation}
Throughout this paper, we consider the above operators as acting on $L^2(\RM)$ %with domain $H^3(\RM)$
corresponding to spatially localized perturbations, or on $L^2_{\rm per}([0,T])$ %with domain $C^3(\RM)$
corresponding to $T$-periodic perturbations.  In both cases, the
operator $\mathcal{D}$ is a positive operator and is hence invertible and hence %.  It follows that
\eqref{linear} can be written as a spectral problem for a linear operator:
\[
\mathcal{A}v=\mu v,\;\;\;\mathcal{A}=\mathcal{D}^{-1}J\mathcal{L}.
\]
As $\mathcal{L}$ has is a differential operator with periodic coefficients, the natural setting to study
the $L^2(\RM)$ spectrum of the operator $\mathcal{A}$ is that of Floquet theory.  We begin with
the following standard definition.

%
%It follows from Floquet theorem that the spectrum of the operator $\mathcal{A}$ is the same in
%both of the spaces $C_b(\RM)$ and $L^2(\RM)$.  Throughout this paper, we will consider the case
%of localized perturbations of the underlying periodic wave.  In this case, the $L^2$ spectrum
%is entirely essential.
%
%%Moreover, it follows from basic Floquet theory the spectrum in either of these spaces consists entirely of $L^\infty$ eigenvalues
%%(see Definition ? below).
%
%
%
%
%
%
%
%
%
%
%Since the operator $\mathcal{A}$ is a non-local operator with periodic coefficients, we recall
%basic definitions and results from Floquet theory.

\begin{df}
The monodromy operator $M(\mu)$ is defined to be the period map
\[
\MM(\mu)=\Phi(T;\mu)
\]
where $\Phi(x;\mu)$ satisfies the first order system
\begin{equation}
\Phi(x;\mu)_x=\HM(x,\mu)\Phi(x;\mu)\label{system}
\end{equation}
subject to the initial condition $\Phi(0)={\bf I}$, where ${\bf I}$ is the $3\times 3$ identity matrix and
\[
\HM(x,\mu)=\left(
           \begin{array}{ccc}
             0 & 1 & 0 \\
             0 & 0 & 1 \\
             -\frac{1}{c}\left(\mu+u_xf''(u)\right) & \frac{1}{c}\left(c-1-f'(u)\right) & \frac{\mu}{c} \\
           \end{array}
         \right).
\]
\end{df}

It now follows from an easy calculation that \eqref{linear} has no point spectrum in $L^2(\RM)$.  %  for $p<\infty$.
Indeed, suppose $\Psi$ is a vector solution of \eqref{system} corresponding to a non-trivial $L^2(\RM)$ eigenfunction
of $\mathcal{A}$ with eigenvalue $\mu$.  From the definition of the monodromy operator, we have
\[
\Psi(NT)=\MM(\mu)^N\Psi(0)
\]
for any $N\in\ZM$.  % and solution $\Psi$ of \eqref{linear} with spectral parameter $\mu$.
It follows that $\Psi$ can be at most bounded on $\RM$ and must not decay as $N\to\pm\infty$.
This observation leads one to the following definition.

\begin{df}
We say $\mu\in\spec(\mathcal{A})$ if there exists a non-trivial bounded function $\psi$ such that $\mathcal{A}\psi=\mu\psi$ or,
equivalently, if there exists a $\lambda\in S^1$ such that
\[
\det(\MM(\mu)-\lambda \IM)=0.
\]
Following Gardner \cite{Gard} we define the periodic Evans function $D:\CM\times\CM\to\CM$ to be
\[
D(\mu,\lambda)=\det(\MM(\mu)-\lambda\IM).
\]
Finally, we say the periodic solution $u(x;a,E,c)$ is spectrally stable if $\spec(\mathcal{A})$ does not intersect
the open right half plane.
\end{df}

\begin{rem}
First, notice by the Hamiltonian nature of \eqref{linear}, $\spec(\mathcal{A})$ is symmetric with respect to reflections about
the real and imaginary axis.  Thus, spectral stability occurs if and only if $\spec(\mathcal{A})\subset\RM i$.

Secondly, since we are interested primarily with the roots of $D(\mu,\lambda)$ for $\lambda$ on the unit
circle, we will frequently work with the function $D(\mu,e^{i\kappa})$ for $\kappa\in\RM$, which is
actually the function considered by Gardner.
\end{rem}

It follows that we can parameterize the continuous $L^2(\RM)$ spectrum of the operator $\mathcal{A}$
by the Floquet parameter $\kappa$:
\[
\spec(\mathcal{A})=\bigcup_{\kappa\in[-\pi,\pi)}\{\mu\in\CM:D(\mu,e^{i\kappa})=0\}.
\]
In particular, the zero's of the function $D(\mu,e^{i\kappa})$ for a fixed $\kappa\in\RM$ correspond
to the $L^2_{\rm per}([0,T])$-eigenvalues of the operator resulting from the map
$\partial_x\mapsto\partial_x+i\kappa$ applied to \eqref{linear}, and hence the continuous
spectrum of $A$ can be parameterized by a one-parameter family of eigenvalue problems.
By a standard result of Gardner, if $D(\mu_0,e^{i\kappa_0})=0$, then the multiplicity
of $\mu_0$ as a periodic eigenvalue of the corresponding linear operator is precisely the
multiplicity of $\mu_0$ as a root of the Evans function.
As we will see below, the integrable structure of \eqref{travelode} implies that the function $D(\mu,1)$ has
a zero of multiplicity (generically) three at $\mu=0$.  For small $\kappa$ then, there will be
in general three branches $\mu_j(\kappa)$ of roots of $D(\mu_j(\kappa),e^{i\kappa})$ which bifurcate
from the origin.  Assuming these branches are analytic\footnote{In general, for each $j$, the theory of branching solutions of non-linear
equations guarantees the existence of a natural number $m_j$ such that $\mu_j(\cdot)$ is an analytic function of $\kappa^{1/m_j}$.
As we will see in our case, the Hamiltonian nature of the linearized operator $\mathcal{A}$ assures that $m_j=1$, and hence
the roots are in fact analytic functions of the Floquet parameter.}
in $\kappa$, it follows that a necessary condition for spectral stability is thus
\begin{equation}
\frac{\partial}{\partial \kappa}\mu_j(\kappa)\big{|}_{\kappa=0}\in\RM i.\label{firstordercondition}
\end{equation}
This naturally leads to the use of perturbation methods in the study of the spectrum of $\mathcal{A}$ near the origin, i.e.
modulational instability analysis of the underlying traveling wave.  As we will see, the first order
terms of a Taylor series expansion of the three branches $\mu_{j}(\kappa)$ can be encoded as roots
of a cubic polynomial, and hence spectral stability is determined by the sign of the associated % cubic
discriminant.  Moreover, it follows by the Hamiltonian structure of \eqref{linear} that in fact $\sigma(\mathcal{A})\subset\RM i$
if \eqref{firstordercondition} holds and the roots of the cubic polynomial are distinct.

We conclude this section by reviewing some basic global features of the spectrum of the linearized operator $\mathcal{A}$
which are useful in a local analysis near $\mu = 0$. We also state some important properties of the Evans
function $D(\mu,\lambda)$ which are vital to the foregoing analysis.

\begin{prop}\label{globalspec}
The $L^2(\RM)$-spectrum of the operator $\mathcal{A}$ has the following properties:
\begin{enumerate}
\item[(i)] There are no isolated points of the spectrum.  In particular, the spectrum generically consists of
piecewise smooth arcs.
\item[(ii)] The entire imaginary axis is contained in the spectrum, i.e. $i \RM \subset \spec(\mathcal{A}).$
%Further for $|\mu|$ sufficiently large along the imaginary axis the multiplicity is one.
%\item[(iii)] $ \RM \cap \spec(\partial_x\mathcal{L})$ consists of a finite number of
%points. In particular there are no bands on the real axis.
\end{enumerate}
Moreover, the Evans function $D(\mu,\lambda)$ satisfies the following:
\begin{enumerate}
\item[(iii)] $D(\mu,0)=e^{\mu T/c}$.
\item[(iv)] $D(\mu,\lambda)=\det(\MM(\mu)-\lambda {\bf I})=-\lambda^3 + a(\mu) \lambda^2 - a(-\mu)e^{\mu T/c}\lambda +e^{\mu T/c} $ with $a(\mu) = \tr(\MM(\mu)).$
%\item[(v)] The function $a(\mu)$ satisfies $a(0)=3,a'(0)=T.$
\end{enumerate}
\end{prop}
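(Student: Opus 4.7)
My plan is to dispatch the four claims in the order (iii), (iv), (i), (ii), since the algebraic identities (iii)--(iv) underlie the spectral statements (i)--(ii). For (iii), Liouville's formula applied to the first-order system \eqref{system} gives $\det\Phi(x;\mu) = \exp\!\left(\int_0^x \tr\HM(y,\mu)\,dy\right)$; since $\tr\HM = \mu/c$ is constant in $y$, this yields $D(\mu,0) = \det\MM(\mu) = e^{\mu T/c}$. For (iv), the characteristic polynomial of the $3\times 3$ matrix $\MM(\mu)$ reads $\det(\MM-\lambda\IM) = -\lambda^3 + \tr(\MM)\lambda^2 - \tr(\MM^{-1})\det(\MM)\,\lambda + \det(\MM)$, so in view of (iii) the proposition reduces to verifying the identity $\tr(\MM(\mu)^{-1}) = a(-\mu)$. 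This is the main content of the proposition, and my approach is to show that the Floquet multipliers of the third-order ODE $L_\mu v = 0$ encoded by $\HM(x,\mu)$ satisfy $\lambda_j(-\mu) = 1/\lambda_j(\mu)$ after suitable reordering.

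To prove the reciprocal pairing, observe that the formal $L^2$-adjoint of the operator pencil $J\mathcal{L}-\mu\mathcal{D}$ equals $-\mathcal{L}J - \mu\mathcal{D}$ (using $J^* = -J$ together with the self-adjointness of $\mathcal{L}$ and $\mathcal{D}$), so that the adjoint third-order equation $L_\mu^\dagger w = 0$ is precisely $\mathcal{L}Jw = -\mu\mathcal{D}w$. Differentiating this in $x$, and using the commutator $[\partial_x,\mathcal{L}] = -f''(u)u_x$ together with the fact that $\partial_x$ commutes with $\mathcal{D}$, gives $J\mathcal{L}(w') = -\mu\mathcal{D}(w')$, i.e.\ $L_{-\mu}(w') = 0$. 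The map $w \mapsto w'$ therefore sends the three-dimensional solution space of $L_\mu^\dagger$ into that of $L_{-\mu}$; for $\mu\ne 0$ this map is injective, since $\sum c_j w_j' = 0$ forces $\sum c_j w_j$ to be constant and no nonzero constant satisfies $L_\mu^\dagger w = 0$, and hence it is a linear isomorphism preserving Floquet multipliers. Combined with the standard Lagrange-identity pairing between $L_\mu$ and $L_\mu^\dagger$, whose boundary bilinear form is conserved along solutions and therefore forces $\lambda\lambda^\dagger = 1$ for any pair of Floquet multipliers of the two adjoint equations, one concludes $\lambda_j(-\mu) = 1/\lambda_j(\mu)$ and hence $\tr\MM(-\mu) = \tr\MM(\mu)^{-1}$. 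The principal difficulty of the proposition lies here, in carefully verifying the commutation step and establishing the multiplier-preserving bijection.

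For (i), $D(\mu,\lambda)$ is jointly entire on $\CM^2$ and nontrivial, so the zero set is a $1$-complex-dimensional analytic variety; intersecting with the circle $\{|\lambda|=1\}$ and projecting to the $\mu$-plane yields piecewise smooth arcs and no isolated points, by the Weierstrass preparation theorem (or the implicit function theorem at regular points). For (ii), the key observation is that the formula in (iv), combined with $a(-\mu) = \overline{a(\mu)}$ for $\mu\in i\RM$---which follows from $\overline{\HM(x,i\nu)} = \HM(x,-i\nu)$---yields the functional symmetry $e^{-i\nu T/c - 3i\kappa}D(i\nu,e^{i\kappa}) = -\overline{D(i\nu,e^{i\kappa})}$ for all real $\nu,\kappa$. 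Consequently $F(\nu,\kappa) := -i\,e^{-i\nu T/(2c) - 3i\kappa/2}D(i\nu,e^{i\kappa})$ is real-valued and continuous, and is antiperiodic in $\kappa$ with period $2\pi$ since $e^{-3i\pi} = -1$; the intermediate value theorem then furnishes $\kappa_*\in[0,2\pi]$ with $F(\nu,\kappa_*) = 0$, equivalently $D(i\nu,e^{i\kappa_*}) = 0$, establishing $i\nu\in\spec(\mathcal{A})$ for every $\nu\in\RM$.
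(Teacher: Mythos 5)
Your proposal is correct, and while (i) and (iii) coincide with the paper's treatment (holomorphic continuation of roots via Weierstrass/Gunning, and Abel--Liouville with $\tr\HM=\mu/c$), you take genuinely different routes on the two substantive items. For (iv), the paper simply observes that the spectral problem is invariant under $(x,\mu)\mapsto(-x,-\mu)$, so that $\MM(\mu)$ and $\MM(-\mu)^{-1}$ are similar, and then expands $\det[\MM^{-1}(-\mu)-\lambda\IM]$ to read off $b(\mu)=-e^{\mu T/c}a(-\mu)$; you instead reach the same reciprocal-multiplier statement $\tr\MM(\mu)^{-1}=\tr\MM(-\mu)$ through the formal adjoint pencil $\mathcal{L}J w=-\mu\mathcal{D}w$, the differentiation map $w\mapsto w'$ onto the $-\mu$ problem, and the Lagrange concomitant. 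This is valid (your commutator step is actually unnecessary: differentiating $\mathcal{L}(w')=-\mu\mathcal{D}w$ in $x$ gives $J\mathcal{L}(w')=-\mu\mathcal{D}(w')$ directly), but it is longer and you should note two small points: the Lagrange pairing forces $\lambda\lambda^{\dagger}=1$ only for pairs on which the concomitant is nonzero, with nondegeneracy then giving the matching of the full multiplier sets; and your injectivity argument excludes $\mu=0$, which is recovered by analyticity of both sides in $\mu$. The paper's reflection argument buys brevity; yours makes visible the Hamiltonian/adjoint structure behind the symmetry. For (ii), the paper shows the three roots of $\lambda\mapsto D(\mu,\lambda)$ are symmetric under $\lambda\mapsto\overline{\lambda}^{-1}$ for $\mu\in i\RM$ and concludes by parity that one root lies on the unit circle; your normalization $F(\nu,\kappa)=-ie^{-i\nu T/(2c)-3i\kappa/2}D(i\nu,e^{i\kappa})$, which is real-valued and $2\pi$-antiperiodic in $\kappa$ because the degree $3$ is odd, gives the same conclusion by the intermediate value theorem. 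Both arguments hinge on the oddness of the dimension; yours is an attractive, slightly more constructive variant.
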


\begin{proof}
The first claim, that the spectrum is never discrete, follows from
a basic lemma in the theory of several complex variables: namely
that, if for fixed $\lambda^*$ the function
$D(\mu,\lambda^*)$ has a zero of order $k$ at $\mu^*$ and is holomorphic in a
polydisc about $(\mu^*,\lambda^*)$ then there is some smaller
polydisc about $(\mu^*,\lambda^*)$ so that for every $\lambda$ in a disc about
$\lambda^*$ the function $D(\mu,\lambda)$ (with $\lambda$ fixed) has $k$ roots
in the disc $|\mu-\mu^*|<\delta$. For details see the text of Gunning\cite{Gun}. It is clear from the implicit function
theorem that $\mu$ is a smooth function of
$\lambda$ as long as $\frac{\partial D}{\partial \mu} = \tr(\cof^t(\MM(\mu)-\lambda {\bf I})\MM_\mu^\prime)\neq 0,$
where $\cof$ represents the standard cofactor matrix.

Claim $(iii)$ follows from Abel's formula and the fact that $c\tr(\HM(x,\mu))=\mu$, where $\HM(x,\mu)$
is as in \eqref{system}.

%We postpone the proof of $(v)$ until Lemma \ref{vanish}.

Next, we prove claim $(iv)$.  Since $\mathcal{A}v=\mu v$ is invariant
under the transformation $x\mapsto -x$ and $\mu\mapsto -\mu$, we have $M(\mu)\sim M(-\mu)^{-1}$.  If we define $a(\mu)$
as above and $b(\mu)$ such that
\[
\det[\MM(\mu)-\lambda {\bf I}]=-\lambda^3+a(\mu)\lambda^2+b(\mu)\lambda +e^{\mu T/c},
\]
it follows that
\begin{eqnarray*}
\det[\MM(\mu)-\lambda {\bf I}]&=&\det[{\MM}^{-1}(-\mu)-\lambda {\bf I}]\\
&=&-\lambda^3\det[{\MM}^{-1}(-\mu)]\det[\MM(-\mu)-\lambda^{-1}]\\
&=&-e^{\mu T/c}\lambda^3\left(-\lambda^{-3}+a(-\mu)\lambda^{-2}+b(-\mu)\lambda^{-1}+e^{-\mu T/c}\right)\\
&=&-\lambda^3-e^{\mu T/c}b(-\mu)\lambda^2-e^{\mu T/c}a(-\mu)\lambda+e^{\mu T/c}.
\end{eqnarray*}
%Notice that since $\tr(H(x,\mu))=\mu$ we have $\det(M(\mu))=e^{\mu T}$.
Therefore, $b(\mu)=-e^{\mu T/c}a(-\mu)$ as claimed.

Claim $(ii)$ now follows from a symmetry argument.  Since $a(\mu)$ is real on the real axis, it follows by Schwarz reflection
that for $\mu\in\RM i$ we have $a(\overline{\mu})=\overline{a(\mu)}$.  For $\mu\in\RM i$ then the Evans function takes the form
\[
D(\mu,\lambda)=-\lambda^3+a(\mu)\lambda^2-e^{\mu T}\overline{a(\mu)}\lambda+e^{\mu T/c}.
\]
It follows that
\[
D(\mu,\lambda)=-\lambda^3 e^{\mu T/c}\overline{D(\mu,\overline{\lambda}^{-1})}
\]
so that the roots of $D(\mu,\lambda)$ for a fixed $\mu\in\RM i$ are symmetric about the unit circle.
Since there must be three such roots, one must live on the unit circle and hence $\mu\in\spec(\mathcal{A})$ as claimed.
\end{proof}

\section{Periodic Instabilities: Spectral and Nonlinear Stability Results}

In this section, we make a detailed analysis of the stability of a given periodic traveling wave
solution of \eqref{gbbm} to perturbations with the same periodic structure: such
perturbations correspond to $\lambda=1$ in the above theory.
We begin by a spectral stability analysis, and then conclude with a brief discussion
of a nonlinear (orbital) stability result.

\subsection{Periodic Spectral Instabilities}

Let $u(x;a,E,c)$ be a $T$-periodic traveling wave solution of \eqref{gbbm}.  Considering
the spectral stability of such a solution of perturbations which are $T$-periodic is
equivalent to studying the spectrum of the linear operator $\mathcal{A}$
on the real Hilbert space $L^2_{\rm per}([0,T])$.  We begin with the following lemma.

\begin{lem}\label{kernel}
Let $u(x;a,E,c)$ be the solution of the traveling wave equation \eqref{travelode} satisfying $u(0;a,E,c)=u_{-}$
and $u_x(0;a,E,c)=0$.  A basis of solutions to the first order system
\[
Y_x=\HM(x,0)Y
\]
is given by
\[
Y_1(x)=\left(
               \begin{array}{c}
                 cu_x(x;a,E,c) \\
                 cu_{xx}(x;a,E,c) \\
                 cu_{xxx}(x;a,E,c) \\
               \end{array}
             \right),
\;\;
Y_2(x)=\left(
               \begin{array}{c}
                 cu_a(x;a,E,c) \\
                 cu_{ax}(x;a,E,c) \\
                 cu_{axx}(x;a,E,c) \\
               \end{array}
             \right),
\;\;
Y_3(x)=\left(
               \begin{array}{c}
                 cu_E(x;a,E,c) \\
                 cu_{Ex}(x;a,E,c) \\
                 cu_{Exx}(x;a,E,c) \\
               \end{array}
             \right).
\]
Moreover, a particular solution to the inhomogeneous problem
\[
Y_x=\HM(x,0)Y+W
\]
where $W=\left(0,0,c\mathcal{D}u_x\right)$ is given by
\[
Y_4(x)=\left(
               \begin{array}{c}
                 -cu_c(x;a,E,c) \\
                 -cu_{cx}(x;a,E,c) \\
                 -cu_{cxx}(x;a,E,c) \\
               \end{array}
             \right).
\]
\end{lem}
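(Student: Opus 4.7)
The strategy is a direct substitution, leveraging the fact that the first-order system $Y_x = \HM(x,0) Y$ is nothing but the scalar third-order equation $\partial_x \mathcal{L}[u] v = 0$ rewritten with $y_1 = v$, $y_2 = v_x$, $y_3 = v_{xx}$; here $\mathcal{L}[u] = -c\partial_x^2 + (c-1) - f'(u)$. Consequently each homogeneous candidate $Y_j$ ($j=1,2,3$) solves the system if and only if its first coordinate lies in $\ker(\partial_x\mathcal{L}[u])$ (the overall factor of $c$ being a harmless normalization), and $Y_4$ is a particular solution if and only if its first coordinate satisfies the analogous scalar inhomogeneous equation with source proportional to $\mathcal{D} u_x$.

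For the homogeneous basis I would successively differentiate the twice-integrated profile equation $cu_{xx} - (c-1)u + f(u) = a$ with respect to $x$, $a$, and $E$, obtaining $\mathcal{L}[u] u_x = 0$, $\mathcal{L}[u] u_a = -1$, and $\mathcal{L}[u] u_E = 0$; applying $\partial_x$ to each kills the constant right-hand side and shows $u_x, u_a, u_E \in \ker\partial_x\mathcal{L}[u]$. (The $a$- and $E$-derivatives act on the initial value $u(0) = u_-$ through the energy relation $V(u_-;a,c)=E$, but this is already encoded in the definitions of $u_a$ and $u_E$.) To conclude that $\{Y_1,Y_2,Y_3\}$ is in fact a basis, I would evaluate the Wronskian at $x=0$: using $u_x(0) = u_{ax}(0) = u_{Ex}(0) = 0$ (which follow from $u_x(0;a,E,c)\equiv 0$) together with the implicit function theorem applied to $V(u_-;a,c)=E$ at the non-degenerate turning point $u_-$, which gives $(u_-)_a = -u_-/(cu_{xx}(0))$ and $(u_-)_E = -1/(cu_{xx}(0))$, a short calculation collapses the $3\times 3$ determinant to the value $-c\neq 0$. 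Since $\tr \HM(x,0) = 0$, Abel's formula ensures this Wronskian is constant in $x$, so $Y_1, Y_2, Y_3$ remain linearly independent throughout $\RM$.

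For the particular solution $Y_4$, I would differentiate the profile equation in $c$:
\[
u_{xx} + cu_{cxx} - u - (c-1)u_c + f'(u)u_c = 0,
\]
yielding $\mathcal{L}[u] u_c = u_{xx} - u = -\mathcal{D} u$ and hence $\partial_x\mathcal{L}[u] u_c = -\mathcal{D} u_x$. Converting back to the first-order system, the vector $Y_4$ with first coordinate $-cu_c$ then satisfies $Y_x = \HM(x,0)Y + W$ with inhomogeneity concentrated in the third slot and proportional to $\mathcal{D} u_x$, matching the stated $W$. The step most demanding careful bookkeeping, and hence the main obstacle, is precisely this final match: correctly tracking the $c$-prefactor on $Y_4$, the sign and scaling introduced by the third row of $\HM(x,0)$, and the convention $\mathcal{D} = 1-\partial_x^2$, so that $Y_{4,x} - \HM(x,0)Y_4$ comes out exactly equal to the advertised source.
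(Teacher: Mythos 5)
Your proof is correct and takes essentially the same route as the paper, whose entire argument is the remark that the lemma ``is easily verified by differentiating \eqref{travelode} with respect to $x$ and the parameters $a$, $E$, and $c$''; your Wronskian evaluation at $x=0$ yielding $-c$ reproduces the paper's computation of $\det(\UM(0,0))$ carried out immediately after the lemma. On the one bookkeeping point you flag as the main obstacle: completing the substitution gives $Y_{4,x}-\HM(x,0)Y_4=(0,0,-\mathcal{D}u_x)^{t}$, which is exactly the source term used later in the proof of Lemma \ref{Evans1} (there denoted $V_1^0$ with $(w_1^0)_1=cu_x$), so the prefactor in the stated $W$ is an internal inconsistency of the paper's statement rather than a gap in your argument.
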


\begin{proof}
This is easily verified by differentiating \eqref{travelode} with respect $x$ and the parameters $a$, $E$, and $c$.
\end{proof}

By the above lemma, three linearly independent solutions of the differential equation
\[
\partial_x\mathcal{L}v=0
\]
are given by the functions $u_x$, $u_a$, and $u_E$ above.  Notice that here we are
considering the formal operators with out any reference to boundary conditions.
In order to understand the structure of the periodic null-space of the operator $\mathcal{A}$,
we now use Lemma \ref{kernel} to express the solution matrix in this basis at $x=T$ and $x=0$.

Notice by hypothesis, for any $a,E,c$ the solution $u$ satisfies
\begin{eqnarray}
u(0;a,E,c)&=&u_{-}=u(T;a,E,c)\label{initial1}\\
u_x(0;a,E,c)&=&0=u_x(T,a,E,c)\label{initial2}\\
u_{xx}(0;a,E,c)&=&-\frac{1}{c}V'(u_{-};a,c)=u_{xx}(0;a,E,c)\label{initial3}
\end{eqnarray}
and, moreover, it follows from \eqref{travelode} that $u_{xxx}(0;a,E,c)=0$.  Defining $\UM(x,0)=[Y_1(x),Y_2(x),Y_3(x)]$
where $Y_1$, $Y_2$, $Y_3$ are vector functions corresponding to the solutions $u_x$, $u_a$, and $u_E$, respectively,
it follows by differentiating the above relations that
\begin{equation}
\UM(0,0)=
\left(
  \begin{array}{ccc}
    0 & c\frac{\partial u_{-}}{\partial a} & c\frac{\partial u_{-}}{\partial E} \\
    -V'(u_{-}) & 0 & 0 \\
    0 & 1+\left(c-1-f'(u_{-})\right)\frac{\partial u_{-}}{\partial a} & \left(c-1-f'(u_{-})\right)\frac{\partial u_{-}}{\partial E} \\
  \end{array}
\right).\label{matrixsolutioninitial}
\end{equation}
Differentiating the relation $E=V'(u_{-};a,c)$ with respect to $E$ gives
\[
\det(\UM(0,0))=-cV'(u_{-})\frac{\partial u_{-}}{\partial E}=-c,
\]
and hence these solutions are linearly independent at $x=0$, and hence for all $x$.  %Thus, we can compute the monodromy
%matrix $\MM(\mu)$ by determining $\UM(T,0)$ and right composing with $\UM(0,0)^{-1}$.
Moreover, using the chain rule to differentiate \eqref{initial1}-\eqref{initial3}, we see
the matrix $\UM(T;0)$ is given by
%For example, differentiating \eqref{initial1} with respect to $a$ gives
%\[
%\frac{\partial u}{\partial a}\left(T;a,E,c\right)+u_x(T;a,E,c)T_a(a,E,c)=\frac{\partial u_{-}}{\partial a}
%\]
%from which it follows $u_a(T;a,E,c)=\frac{\partial u_{-}}{\partial a}$.  Continuing in this manner yields
\[
\UM(T,0)=\UM(0,0)+
\left(
  \begin{array}{ccc}
    0 & 0 & 0 \\
    0 & V '(u_{-})T_a & V '(u_{-})T_E \\
    0 & 0 & 0 \\
  \end{array}
\right).
\]
It follows that $\UM(T,0)-\UM(0,0)$ is a rank one matrix.  %, which naturally leads one to the following proposition.
%
%\begin{prop}\label{normalform}
%There exists a basis in $\RM^3$ such that the monodromy matrix $\MM(\mu)$ evaluated at $\mu=0$
%takes the following Jordan normal form:
%\begin{equation}
%\MM(0)\sim
%\left(
%  \begin{array}{ccc}
%    1 & 0 & 0 \\
%    0 & 1 & \sigma \\
%    0 & 0 & 1 \\
%  \end{array}
%\right)\label{jordanform}
%\end{equation}
%where $\sigma\neq 0$ as long as $\nabla_{a,E}T(a,E,c)\neq 0$.  In particular, the monodromy
%operator at $\mu=0$ has a single eigenvalue $\lambda=1$ with algebraic multiplicity three
%and geometric multiplicity two as long as the period is not a critical point with respect
%to the parameters $a$ and $E$ for a fixed wave speed $c$.
%\end{prop}

Our immediate goal is to relate this information to the structure of the periodic Evans function
$D(\mu,e^{i\kappa})$.  As we will see, the fact that $\MM(0)-{\bf I}$ is rank one allows for significant
simplifications in the perturbation calculations.  Without this fact, one would have to compute variations
in the vector solutions at $\mu=0$ to an extra degree, which would force the use of multiple applications of
variation of parameters along the same null direction.  In particular, we have the following lemma.

\begin{lem}\label{Evans1}
For $|\mu|\ll 1$, the periodic Evans function satisfies
\[
%D(\mu,1)=-\frac{1}{2}\{T,M,P\}_{a,E,c}\mu^3+\mathcal{O}(|\mu|^3).
D(\mu,1)=-\{T,M,P\}_{a,E,c}\mu^3+\mathcal{O}(|\mu|^3).
\]
\end{lem}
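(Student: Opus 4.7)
My starting point is the explicit factorization from Proposition \ref{globalspec}(iv),
$$D(\mu, 1) = -1 + a(\mu) - a(-\mu) e^{\mu T/c} + e^{\mu T/c}, \qquad a(\mu) = \tr \MM(\mu),$$
which reduces the problem to computing the Taylor coefficients of the single scalar function $a(\mu)$. Writing $\tau = T/c$ and $a(\mu) = a_0 + a_1\mu + a_2\mu^2 + a_3\mu^3 + O(\mu^4)$, a direct series manipulation yields
$$D(\mu, 1) = (a_0 - 1) + 2(a_1 - \tau)\mu + \tau(a_1 - \tau)\mu^2 + \Big(\tfrac{\tau^3}{6} - a_2\tau + 2a_3\Big)\mu^3 + O(\mu^4).$$
The rank-one structure of $\UM(T, 0) - \UM(0, 0)$ read off from \eqref{matrixsolutioninitial} implies $\MM(0) - \IM = (\UM(T, 0) - \UM(0, 0))\,\UM(0, 0)^{-1}$ is also rank one; combined with $\det\MM(0) = 1$ from Proposition \ref{globalspec}(iii) this forces $\lambda = 1$ to be the sole eigenvalue of $\MM(0)$ with algebraic multiplicity three, hence $a_0 = 3$.

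Next I would construct a fundamental matrix
$$\UM(x; \mu) = \UM(x; 0) + \mu \UM^{(1)}(x) + \mu^2 \UM^{(2)}(x) + \mu^3 \UM^{(3)}(x) + O(\mu^4)$$
for $Y_x = \HM(x, \mu) Y$ with initial data $\UM(0; \mu) = \UM(0; 0)$ by perturbation theory. The pivotal observation is that $(\partial_\mu \HM)\,Y_1 = -W/c$, with $W = (0, 0, c\mathcal{D} u_x)^T$ the forcing already appearing in Lemma \ref{kernel}; thus the particular solution $Y_4$ is exactly the first-order $\mu$-correction along the translational direction. Analogous particular solutions for the $\mu$-corrections starting from $Y_2$ and $Y_3$ are produced by differentiating the integral representations of the conserved quantities $M$ and $P$ with respect to $a$ and $E$, which is the mechanism by which gradients of $T, M, P$ enter the Taylor coefficients of $a(\mu)$. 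Assembling the first-order expansion and computing $\tr\MM'(0)$ would give $a_1 = \tau$, which causes the $\mu$ and $\mu^2$ coefficients of $D(\mu, 1)$ to vanish identically.

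It remains to extract the $\mu^3$ coefficient. Rather than compute $a_2$ and $a_3$ in isolation, I would work directly with $\det(\MM(\mu) - \IM) = \det(\UM(T; \mu) - \UM(0, 0))/\det\UM(0, 0)$, expanding each column $Z_j(T; \mu) - Y_j(0)$ in $\mu$ and applying the Cauchy--Binet-type multilinear expansion of the $3 \times 3$ determinant. At order $\mu^3$ several contributions appear, but the generating-function relations derived from the classical action $K$ in Section 2 -- in particular $T_a = M_E$, $T_c = P_E$ and the cyclic counterparts -- collapse the resulting expression into a single $3 \times 3$ Jacobian, giving $\tfrac{\tau^3}{6} - a_2\tau + 2a_3 = -\{T, M, P\}_{a, E, c}$ after dividing by $\det\UM(0, 0) = -c$.

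The principal obstacle is the algebraic bookkeeping at third order: three independent solutions each contribute second- and third-order perturbative corrections, so the $\mu^3$ coefficient of the determinantal expansion involves many cross-terms. Recognizing that this heap collapses into a single Jacobian determinant hinges on systematically invoking the Poisson-bracket identities among gradients of $T, M, P$ furnished by the generating function $K$; without those identities, the $\mu^3$ coefficient would remain an unilluminating combination of integrals depending on the nonlinearity $f$.
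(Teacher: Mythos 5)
Your plan follows essentially the same route as the paper's proof: expand the fundamental matrix in powers of $\mu$ starting from the kernel basis $Y_1,Y_2,Y_3$ of Lemma \ref{kernel}, use the particular solution $Y_4$ (the $u_c$-direction, corrected by multiples of $Y_2,Y_3$ to vanish at $x=0$) to capture the first-order correction along $u_x$, and extract the $\mu^3$ coefficient of $\det(\UM(T;\mu)-\UM(0,0))/\det\UM(0,0)$ by a multilinear determinant expansion that collapses to $\{T,M,P\}_{a,E,c}$ via the action identities $T_a=M_E$, $T_c=P_E$. The only slip is in your preliminary series for $-1+a(\mu)-a(-\mu)e^{\mu T/c}+e^{\mu T/c}$: its constant term is identically $0$, not $a_0-1$ (which with $a_0=3$ would wrongly give $D(0,1)=2$); this does not affect the rest of the argument, whose $\mu$, $\mu^2$, and $\mu^3$ coefficients you state correctly.
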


\begin{proof}
Let $w_i(x;\mu)$, $i=1,2,3$, be three linearly independent solutions of \eqref{system}, and let
$\WM(x,\mu)$ be the solution matrix with columns $w_i$.  Expanding the above solutions in powers
of $\mu$ as
\[
w_i(x,\mu)=w_i^0(x)+\mu w_i^1(x)+\mu^2 w_i^2(x)+\mathcal{O}(|\mu|^3)
\]
and substituting them into \eqref{system}, the leading order equation
becomes
\[
\frac{d}{dx}w_i^0(x)=\HM(x,0)w_i^0(x).
\]
Using Proposition \ref{kernel}, we choose $w_i^0(x)=Y_i(x)$.% where the vectors $Y_i(x)$ are defined in equation \eqref{solutionvec}.
The higher order terms in the above expansion yield
\begin{equation}
\frac{d}{dx}w_i^{j}(x)=\HM(x,0)w_i^{j}(x)+V_i^{j-1}(x),\;\;j\geq 1,\label{nonhomeqn}
\end{equation}
where $V_i^{j-1}=\left(0,0,-c^{-1}\mathcal{D}(w_i^{j-1})_1\right)^{t}$ and $(v)_1$ denotes the first component of the vector $v$.
Notice that for each of the higher order terms $j\geq 1$, we require $w_j^i(0)=0$.  Notice this implies
that $\WM(0,\mu)=\UM(0,0)$ in a neighborhood of $\mu=0$, where $\UM(0,0)$ is defined in \eqref{matrixsolutioninitial}.
The solution of the inhomogeneous problem is given by the variation of parameters formula
\begin{align}
w_i^j(x)&=\WM(x,0)\int_0^x \WM(s,0)^{-1}V_i^{j-1}(s)ds\nonumber\\
&=\left(
     \begin{array}{ccc}
   cu_x\int_0^x\mathcal{D}(w_i^{j-1})_1\{u,u_x\}_{a,E}\;dz-u_a\int_0^x\mathcal{D}(w_i^{j-1})_1dz+u_E\int_0^x\mathcal{D}(w_i^{j-1})_1u\;dz \\
   cu_{xx}\int_0^x\mathcal{D}(w_i^{j-1})_1\{u,u_x\}_{a,E}\;dz-u_{ax}\int_0^x\mathcal{D}(w_i^{j-1})_1dz+u_{Ex}\int_0^x\mathcal{D}(w_i^{j-1})_1u\;dz\\
   cu_{xxx}\int_0^x\mathcal{D}(w_i^{j-1})_1\{u,u_x\}_{a,E}\;dz-u_{axx}\int_0^x\mathcal{D}(w_i^{j-1})_1dz+u_{Exx}\int_0^x\mathcal{D}(w_i^{j-1})_1u\;dz\\
     \end{array}
   \right)
\label{variation}
\end{align}
for $j\geq 1$.  Notice we have used the identities $c\{u,u_x\}_{E,x}=-1$ and $c\{u,u_x\}_{x,a}=u$ extensively in the above formula,
which can be easily derived via equation \eqref{quad2}.  Indeed, differentiating \eqref{quad2} with respect to $E$
and subtracting $u_Eu_{xx}$ immediately yields the first identity.

Now, notice that it would be a daunting task to use \eqref{variation} to the specified
order needed.  However, the integrable structure of \eqref{linear} allows for an alternative, yet equivalent,
expression in the case $i=j=1$ which makes a seemingly second order calculation
come in at first order.  Indeed, in this case equation \eqref{nonhomeqn}  is equivalent
to $L_0 w_1^1=u_x$ and hence it follows from Proposition \ref{kernel} that we can choose
\[
w_1^1(x)=\left(
           \begin{array}{c}
             -cu_c \\
             -cu_{c}' \\
             -cu_{c}'' \\
           \end{array}
         \right)
         +
         \left(u_{-}+\frac{1}{c}V'(u_{-};a,c)\right)\left(
                \begin{array}{c}
                 c u_a \\
                 c u_a' \\
                 c u_a'' \\
                \end{array}
              \right)
              -\left(\frac{u_{-}^2}{2}+\frac{1}{c}V'(u_{-};a,c)\right)
              \left(
                \begin{array}{c}
                 c u_E \\
                 c u_E' \\
                 c u_E'' \\
                \end{array}
              \right)
\]
where the above constants in front of $Y_2$ and $Y_3$ are determined by the
requirement $w_1^1(0)=0$.  Thus, one can determine the second order variation of $w_1$
in $\mu$ by using \eqref{variation} to compute the {\it first} order variation of the function
$w_1^1$ defined above.  Defining $\delta\WM(\mu):=\WM(x,\mu)\big{|}_{x=0}^T$, it follows that $\WM(\mu)$ can be expanded as
\[
\left(
  \begin{array}{ccc}
    \mathcal{O}(\mu^2) & \mathcal{O}(\mu) & \mathcal{O}(\mu) \\
    \mu     %V'(u_{-})\left(-T_c+\left(u_{-}+\frac{1}{c}V'(u_{-})\right)T_a-\left(\frac{u_{-}^2}{2}+\frac{1}{c}V'(u_{-})\right)T_E\right)+\mathcal{O}(\mu^2)
    %                         & V'(u_{-})T_a & V'(u_{-})T_E  \\
    V'(u_{-})P(u_{-})+\mathcal{O}(\mu^2)
                             & V'(u_{-})T_a & V'(u_{-})T_E  \\
    \mathcal{O}(\mu^2) & \mathcal{O}(\mu) & \mathcal{O}(\mu) \\
  \end{array}
\right)
\]
where $P(x)=-T_c+\left(x+\frac{1}{c}V'(x)\right)T_a-\left(\frac{x^2}{2}+\frac{1}{c}V'(x)\right)T_E$ and
the higher order terms are determined by \eqref{variation}.  Thus,
\[
D(\mu,1)=\det\left(\delta\WM(\mu)\WM(0,0)^{-1}\right)=\mathcal{O}(\mu^3)
\]
and in particular a straightforward calculation yields
\begin{eqnarray*}
\det\left(\delta\WM(\mu)\right)=c\{T,M,P\}_{a,E,c}\;\mu^3+\mathcal{O}(\mu^4).
\end{eqnarray*}
The proof is complete by recalling that $\det(\WM(0,0))=-c$.
\end{proof}

\begin{rem}
Notice that the formula for $D_{\mu\mu\mu}(0,1)$ differs from that derived by Bronski and Johnson \cite{BrJ}
in the case of the generalized KdV by a factor of one-half, which comes from that fact the differences
in the definitions of the momentum functionals in each work: in particular, the factor of $\frac{1}{2}$
in \eqref{momentum} is not present in \cite{BrJ}.
\end{rem}

Assuming the Jacobian $\{T,M,P\}_{a,E,c}$ is non-zero, it follows that zero is a $T$-periodic eigenvalue
of $\mathcal{A}$ of multiplicity three.  By computing the orientation index
\[
{\rm sign}\left(\{T,M,P\}_{a,E,c}\right){\rm sign}\left(D(\infty,1)\right)
\]
it is clear that we will have exponential periodic instability of the underlying wave if this index is negative.
With this in mind, we now compute the large $\mu\gg 1$ behavior of the periodic Evans function in the next lemma.

\begin{lem}
The function $D(\cdot,1)\big{|}_{\RM}:\RM\to\RM$ satisfies the following asymptotic relation:
\[
\lim_{\RM\ni\mu\to\infty} D(\mu,1)=-\infty.
\]
\end{lem}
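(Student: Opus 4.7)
The plan is to use part $(iv)$ of Proposition \ref{globalspec}, which gives $D(\mu,1)=\det(\MM(\mu)-\Ib)$, together with the factorization
\[
D(\mu,1) \;=\; \prod_{i=1}^{3}\bigl(\lambda_i(\mu)-1\bigr),
\]
where $\lambda_1,\lambda_2,\lambda_3$ are the Floquet multipliers. Combined with the determinant identity $\lambda_1\lambda_2\lambda_3=e^{\mu T/c}$ coming from part $(iii)$, it suffices to extract the leading large-$\mu$ asymptotics of each $\lambda_i$ to read off the sign and magnitude of $D(\mu,1)$.

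To obtain these asymptotics, I would first analyze the pointwise eigenvalues of the coefficient matrix $\HM(x,\mu)$. A direct expansion of $\det(\HM(x,\mu)-\nu\Ib)$ yields the characteristic equation
\[
\nu^{3}-\tfrac{\mu}{c}\nu^{2}-\tfrac{c-1-f'(u)}{c}\nu+\tfrac{\mu+u_xf''(u)}{c}=0.
\]
For large positive $\mu$ the $\mu$-dependent terms dominate, and the dominant balance $(\mu/c)(1-\nu^{2})=0$ on bounded roots together with the balance $\nu^{3}\sim(\mu/c)\nu^{2}$ on the large root produces three smooth pointwise eigenvalues $\nu_{0}(x,\mu)=\mu/c+O(1/\mu)$ and $\nu_{\pm}(x,\mu)=\pm 1+O(1/\mu)$, uniformly in $x\in[0,T]$. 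A standard WKB/diagonalization argument then lifts this pointwise information to the monodromy: changing variables by the smooth matrix $P(x,\mu)$ of pointwise eigenvectors turns \eqref{system} into a system that is diagonal to leading order with off-diagonal coupling of size $O(1/\mu)$, and integration from $0$ to $T$ yields
\[
\lambda_{0}(\mu)=e^{\mu T/c}\bigl(1+o(1)\bigr),\qquad \lambda_{\pm}(\mu)=e^{\pm T}+o(1),
\]
consistent with the product identity above (since $\lambda_{+}\lambda_{-}\to 1$). The main obstacle is the rigorous control of the off-diagonal error term during this diagonalization; however, this is a routine singular-perturbation argument for linear ODEs with periodic coefficients, and the $C^{2}$ regularity of $f$ (hence of the profile $u$) is more than enough to carry it out.

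Once the three asymptotics are in hand, the result is immediate. Substituting into the factored form gives
\[
D(\mu,1)=\bigl(\lambda_{0}-1\bigr)\bigl(\lambda_{+}-1\bigr)\bigl(\lambda_{-}-1\bigr) \;=\; e^{\mu T/c}\bigl(e^{T}-1\bigr)\bigl(e^{-T}-1\bigr)\bigl(1+o(1)\bigr),
\]
and since $(e^{T}-1)(e^{-T}-1)=2-2\cosh T<0$ whenever $T>0$, the leading coefficient is strictly negative while $e^{\mu T/c}\to+\infty$, so $D(\mu,1)\to-\infty$ as $\mu\to+\infty$, as claimed.
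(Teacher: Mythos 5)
Your argument is correct and shares the paper's basic strategy---extract the large-$\mu$ asymptotics of the monodromy and read off the sign of $D(\mu,1)$---but the execution is genuinely different, and arguably sharper. The paper rescales space, approximates $\MM(\mu)$ by the exponential of a constant-coefficient matrix $A(\mu)$, and then argues from the palindromic form $D(\mu,1)=-1+a(\mu)-a(-\mu)e^{\mu T/c}+e^{\mu T/c}$ of Proposition 1(iv) that the dominant term is $-a(-\mu)e^{\mu T/c}$; in that version the two subdominant spatial exponents are claimed to scale like $a_{2,3}\,\mu$ with $a_{2,3}<0$. You instead diagonalize the variable-coefficient system directly: the dominant balance in the symbol $\nu^3-(\mu/c)\nu^2-\cdots$ yields one exponent $\mu/c+O(1/\mu)$ and two exponents $\pm1+O(1/\mu)$ that stay $O(1)$, hence multipliers $e^{\mu T/c}(1+o(1))$ and $e^{\pm T}+o(1)$, and the negativity then drops out transparently from $(e^{T}-1)(e^{-T}-1)=2-2\cosh T<0$; this is consistent with $\det\MM(\mu)=e^{\mu T/c}$ and, plugged into the paper's trace formula, reproduces the same conclusion with $a(-\mu)\to 2\cosh T$ bounded rather than divergent. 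Both routes give $D(\mu,1)\to-\infty$; yours isolates the three Floquet multipliers individually (more information than the trace asymptotics the paper invokes) at the price of justifying the Levinson/WKB diagonalization, which, as you note, is routine here because the three pointwise eigenvalues remain uniformly separated on $[0,T]$ and the eigenvector frame varies only by $O(1/\mu)$.
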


\begin{proof}
This follows from a simple calculation.  By introducing variable change $y=|\mu|^{-1/3}x$ in \eqref{linear}, it follows that
as $\mu\to+\infty$ the monodromy operator satisfies the asymptotic relation
\begin{equation}
M(\mu)\sim \exp\left(A(\mu) T/c\right),\;\;\;|\mu|\gg 1,\label{asymptotic1}
\end{equation}
where
\[
A(\mu)=
\left(
  \begin{array}{ccc}
    0 & c & 0 \\
    0 & 0 & c \\
    -\mu & 0 & \mu \\
  \end{array}
\right).
\]
The eigenvalues $\lambda_j$ of $A(\mu)$ are rather complicated, but for $\mu$ large and real
they satisfy ${\rm Re}(\lambda_j)\sim a_j\mu$, where
\begin{align*}
a_1&=1\\
a_2&=a_3=\frac{1}{3}-\frac{1}{32^{2/3}2^{1/3}}-\frac{2^{1/3}}{62^{1/3}}.
\end{align*}
It follows that
%From the asymptotic relation \eqref{asymptotic1} it follows that
\[
%a(\mu)\sim e^{\mu T/c}+e^{\left(-1+\sqrt{3}i\right)\mu T/2c}+e^{\left(-1-\sqrt{3}i\right)\mu T/2c}
a(\mu)\sim e^{a_1\mu T/c}+e^{a_2\mu T/2c}+e^{a_3\mu T/2c}
\]
and thus, since $a_2=a_3<0<a_1<c$, the leading order term of
\[
D(\mu,1)=-1+a(\mu)-a(-\mu)e^{\mu T/c}+e^{\mu T/c}
\]
as $\mu\to+\infty$ comes from $-a(-\mu)e^{\mu T/c}$, which completes the proof.
\end{proof}

We are now able to state our first main theorem for this section.

\begin{thm}\label{thm:fw}
%If $D_{\mu\mu\mu}(0,1)<0$, then the number of roots of $D(\mu,1)$ (i.e. the number of periodic eigenvalues
let $u(x;a_0,E_0,c_0)$ be a periodic solution of \eqref{travelode}.
If $\{T,M,P\}_{a,E,c}$ is negative at $(a_0,E_0,c_0)$, then the number of roots of $D(\mu,1)$ (i.e. the number of periodic eigenvalues
of $\mathcal{A}$) on the positive real axis is odd.  In particular, %$\{T,M,P\}_{a,E,c}$ and
if the period
is an increasing function of energy at $(a_0,E_0,c_0)$, i.e. $T_E(a_0,E_0,c_0)>0$,
then the periodic traveling wave $u(x;a_0,E_0,c_0)$ is spectrally unstable to $T(a_0,E_0,c_0)$-periodic
perturbations if and only if $\{T,M,P\}_{a,E,c}$ is negative at $(a_0,E_0,c_0)$.
\end{thm}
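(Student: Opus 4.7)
The plan is to deduce the parity statement directly from the Evans function asymptotics, and then to use it together with additional structural information for the ``if and only if'' half of the claim.

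For the first assertion, I would combine Lemma~\ref{Evans1}, which identifies the leading behavior $D(\mu,1) \sim -\{T,M,P\}_{a,E,c}\,\mu^3$ as $\mu \to 0$, with the preceding asymptotic lemma, which gives $D(\mu,1) \to -\infty$ as $\mu \to +\infty$. Under the hypothesis $\{T,M,P\}_{a,E,c}<0$ one has $D(\mu,1)>0$ for all sufficiently small $\mu>0$, so the real-analytic function $D(\cdot,1)|_{\RM}$ must change sign on $(0,\infty)$, and the parity of its real zero count there (weighted by multiplicity) equals the parity of the number of sign changes, which is odd. By the multiplicity-matching statement between roots of $D$ and $T$-periodic eigenvalues of $\mathcal{A}$ recalled in Section~3, the number of positive real $T$-periodic eigenvalues of $\mathcal{A}$ is then odd, and in particular at least one, establishing spectral instability; this argument already gives the ``if'' direction of the ``in particular'' claim with no use of the hypothesis $T_E>0$.

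The main obstacle is the converse direction, where one must show that when both $T_E>0$ and $\{T,M,P\}_{a,E,c}>0$ hold, the operator $\mathcal{A}$ has no spectrum in the open right half-plane whatsoever. The same parity argument with the opposite Jacobian sign only yields that the positive real periodic eigenvalue count is \emph{even}, which does not force it to be zero, and the Hamiltonian symmetry noted after the definition of $\spec(\mathcal{A})$ additionally permits complex unstable eigenvalues in quadruples. I plan to close this step via a Jones-Grillakis-Kapitula-Kevrekidis-Sandstede style instability index: since $\mathcal{A} = \mathcal{D}^{-1}J\mathcal{L}$ with $J=\partial_x$ skew and $\mathcal{L}$ self-adjoint on $L^2_{\rm per}([0,T])$, the total count of unstable periodic eigenvalues (with the appropriate Krein weighting) is bounded by $n(\mathcal{L})$ minus the number of negative eigenvalues of a $3\times 3$ constraint matrix built from the Hessian of the augmented Lagrangian $H + cP + aM$ restricted to the three-parameter wave family. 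Sturm-Liouville applied to $u_x\in\ker(\mathcal{L})$, which has two zeros per period, fixes $n(\mathcal{L})$, while the identities $T_a = M_E$ and $T_c = P_E$ coming from the action \eqref{action} reduce the signature of the constraint matrix to combined information about $T_E$ and $\{T,M,P\}_{a,E,c}$; under the stated hypotheses the index would vanish, producing spectral stability.

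The principal technical difficulty is pinning down this index formula in the BBM setting, where the nontrivial symbol of $\mathcal{D}=1-\partial_x^2$ complicates the Pontryagin-space framework developed for the gKdV. An alternative route, less reliant on external machinery, is to perturbatively track the three spectral branches bifurcating from $\mu=0$ into the real axis and show that under the sign hypotheses none of them generates a real positive eigenvalue; but carrying this out requires sharpening the expansion of Lemma~\ref{Evans1} to at least one order beyond the leading cubic, which is comparable in difficulty to the index calculation itself and is where I expect most of the real work to lie.
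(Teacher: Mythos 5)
Your parity argument for the first assertion and the ``if'' direction is exactly the paper's: compare the sign of $D(\mu,1)\sim-\{T,M,P\}_{a,E,c}\,\mu^3$ near zero with the sign at $\mu=+\infty$ and count sign changes. (You in fact track the signs more carefully than the paper's own write-up, which contains a small slip between ``positive'' and ``negative'' at infinity.) Where you diverge is the converse, and here you have correctly isolated the obstacle --- even parity plus the possibility of complex quadruples does not by itself exclude unstable spectrum --- but you reach for heavier machinery than is needed. The paper does not invoke a Krein-weighted count with a $3\times 3$ constraint matrix; it uses the cruder, already-available bound of Theorem~3.1 of \cite{PW1}, namely that the number of $T$-periodic eigenvalues of $\mathcal{A}$ with positive real part is at most the number of negative eigenvalues of $\mathcal{L}$, together with Lemma~4.1 of \cite{MJ1}, which says $T_E>0$ forces $\mathcal{L}$ to have exactly one negative periodic eigenvalue. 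With at most one unstable eigenvalue available, the conjugate-pair symmetry rules out non-real unstable spectrum, and the even parity of the real positive root count when $\{T,M,P\}_{a,E,c}>0$ then forces that count to be zero. So the ``principal technical difficulty'' you flag --- establishing the full index formula in the BBM setting, or alternatively pushing the expansion of Lemma~\ref{Evans1} to higher order --- dissolves entirely: the unweighted upper bound suffices, and it is precisely the hypothesis $T_E>0$ that makes it sharp enough. Your proposed refinement would presumably also work if carried out, and would give more (an exact count rather than a bound), but it is not the route the paper takes and is not required for the statement as given.
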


\begin{proof}
By our work in the proof of Lemma \ref{Evans1}, we know that
$D(\mu,1)=\mathcal{O}(|\mu|^3)$.  Thus, if $D_{\mu\mu\mu}(0,1)<0$ for small positive $\mu$, the number $D(\mu,1)$
is negative for small positive $\mu$.  Since $D(\mu,1)$ is positive for real $\mu$ sufficiently large,
we know that $D(\pm\mu^*,1)=0$ for some $\mu^*\in\RM^*$.  Moreover, since $\mathcal{L}u_x=0$ it follows that
$\mathcal{L}$ has either one or two negative $T$-periodic eigenvalues.  Since the number of negative eigenvalues
of $\mathcal{L}$ provide an upper bound for the number of unstable $T$-periodic eigenvalues of $\mathcal{A}$
with positive real part (see Theorem 3.1 of \cite{PW1}), and since $T_E>0$ implies $\mathcal{L}$ has precisely one negative
$T$-periodic eigenvalue (see Lemma 4.1 of \cite{MJ1}) the proof is now complete.
\end{proof}

We now wish to give insight into the meaning of $\{T,M,P\}_{a,E,c}=0$ at the level of the linearized operator $\mathcal{A}$.
To this end, we consider the linearized operator $\mathcal{A}$ as acting on $L_{\rm per}^2(0,T)$, the space
of $T$-periodic $L^2$ functions on $\RM$.  To begin, we make the assumption that $\{T,M\}_{a,E}$ and
$\{T,P\}_{a,E}$ do not simultaneously vanish.  This assumption will be shown equivalent with the periodic null-space
reflecting the Jordan structure of the monodromy at the origin.  Throughout this brief
discussion, we assume that $\{T,M\}_{a,E}\neq 0$: trivial modifications are needed if $\{T,M\}_{a,E}$ vanishes
but $\{T,P\}_{a,E}$ does not.  First, define the functions
\begin{align*}
\phi_0&= \{T,u\}_{a,E},\;\;\;\;\;\;\;\;\;\;\;\;\;\;\;\;\;\;\;\;\;\;\;\;\;\psi_0=
1,\\
\phi_1&= \{T,M\}_{a,E}\;u_x,\;\;\;\;\;\;\;\;\;\;\;\;\;\;\;\;\;\;\psi_1=\int_0^x\mathcal{D}\phi_2(s)ds,\\
\phi_2&= \{u,T,M\}_{a,E,c}\;\;\;\;\;\;\;\;\;\;\;\;\;\;\;\;\;\;\;\psi_2=-\{T,M\}_{E,c}+\{T,M\}_{a,E}\mathcal{D}u,
\end{align*}
Clearly, each of these functions belong to $L^2_{\rm per}([0,T])$ and
\begin{align*}
\mathcal{A} \phi_0&=0~~~~~~~~~~~~~~~~~~~~~~~~~~~\mathcal{A}^\dag \psi_0 = 0 \\
\mathcal{A} \phi_1&=0~~~~~~~~~~~~~~~~~~~~~~~~~~~\mathcal{A}^\dag \psi_1 = \psi_2 \\
\mathcal{A} \phi_2&=-\phi_1~~~~~~~~~~~~~~~~~~~~~~~\mathcal{A}^\dag \psi_2 = 0.
\end{align*}
In particular, we have used the fact that $\mathcal{D}^{-1}(1)=1$ on $L^2_{\rm per}([0,T])$.
Thus, it follows that the periodic null space
of $\mathcal{A}$ is generated spanned by the functions $\phi_0$ and $\phi_1$.
Moreover, since
\begin{align*}
\left<\phi_0,\psi_0\right>&=\{T,M\}_{a,E}\\
\left<\phi_0,\mathcal{D}u\right>&=\{T,P\}_{a,E}
\end{align*}
the assumption that $\{T,M\}_{a,E}$ and $\{T,P\}_{a,E}$ do not simultaneously vanish implies
that $N_{\rm per}\left(\mathcal{A}^2\right)-N_{\rm per}\left(\mathcal{A}\right)={\rm span}(\phi_2)$, thus reflecting the
Jordan normal form of the period map at $\mu=0$.

Finally, we study the structure of the generalized periodic null space, and seek conditions
for which there is no non-trivial Jordan chain of length two.  By the Fredholm alternative,
such a chain exists if and only if
\[
\left<\mathcal{D}u,\phi_2\right>=\{T,M,P\}_{a,E,c}=0.
\]
Thus, the vanishing of $\{T,M,P\}_{a,E,c}$ is equivalent with a change in the generalized periodic-null space of the
linearized operator $\mathcal{A}$.  This insight has a nice relationship with formal Whitham modulation theory.
One of the big ideas in Whitham theory is to locally parameterize the periodic traveling wave solution
by the constants of motion for the PDE evolution.  The non-vanishing of certain Jacobians is precisely what
allows one to do this.  In fact, the non-vanishing of $\{T,M,P\}_{a,E,c}$ is equivalent to demanding that, locally,
the map $(a,E,c)\mapsto(T,M,P)$ have a unique $C^1$ inverse: In other words, the constants of motion for the gBBM flow
are good local coordinates for the three-dimensional manifold of periodic traveling wave solutions (up to translation).
Similarly, non-vanishing of $\{T,M\}_{a,E}$ and $\{T,P\}_{a,E}$ is equivalent to demanding that the matrix
\[
\left(
  \begin{array}{ccc}
    T_a & M_a & P_a \\
    T_E & M_E & P_E \\
  \end{array}
\right)
\]
have full rank, which is equivalent to demanding that the map $(a,E)\mapsto(T,M,P)$ (for fixed $c$) have a unique
$C^1$ inverse, i.e. two of the conserved quantities provide a smooth parametrization of the family of periodic
traveling waves of fixed wave-speed.

To summarize, the vanishing of $\{T,M,P\}_{a,E,c}$, is connected with a change in the Jordan structure of the linearized
operator $\mathcal{A}$ considered on $L^2_{\rm per}([0,T])$.  In particular, $\{T,M,P\}_{a,E,c}$ ensures the existence
of a non-vanishing Jordan piece in the generalized periodic null-space of dimension {\it exactly} one.  Moreover,
and perhaps more importantly, it guarantees that infinitesimal variations in the constants arising from reducing
the family of periodic traveling waves to quadrature are enough to generate the entire generalized periodic
null-space of the linearized operator $\mathcal{A}$: such a condition is obviously not necessary in our calculations,
but provides significant simplifications in the theory.

\subsection{Nonlinear Periodic-Stability}

We now compliment Theorem \ref{thm:fw} by considering in what sense the Jacobian $\{T,M,P\}_{a,E,c}$ affects
the nonlinear stability of a periodic traveling wave solution of \eqref{gbbm} to $T$-periodic perturbations.
Clearly, the positivity of this index is necessary for such stability but it is not clear if this is also
necessary.  Indeed, the analysis presented below allows for the possibility that a periodic traveling wave
which is spectrally stable to $T$-periodic perturbations could be nonlinearly unstable to such perturbations:
such a result would stand in stark contrast to the solitary wave theory where these two notions of stability are
equivalent (assuming the nondegeneracy condition $\frac{d}{dc}\mathcal{N}(c)\neq 0$).  More will be said on
this at the end of this section.

Such analysis has recently been carried out in the context of the generalized Korteweg-de Vries equation
by Johnson \cite{MJ1}.  There, sufficient conditions for nonlinear stability to $T$-periodic perturbations
were derived in terms of Jacobians of various maps between the parameter space $(a,E,c)$ and the
period, mass, and momentum.  As the theory for the gBBM equation is nearly identical to this work,
we only review the main points of the analysis here and refer the reader to \cite{MJ1} for details.

To begin, we assume the nonlinearity $f$ is such that the Cauchy problem for \eqref{gbbm} is globally
well posed on a real Hilbert space $X$ of $T$-periodic functions defined on $\RM$, which we equip
with the standard $L^2([0,T])$ inner product.  In particular, we require $L^2_{\rm per}([0,T])$
to be a subspace of $X$.  Also, we identify the dual space $X^*$ through the usual
pairing.  Now, we fix a periodic traveling wave solution of \eqref{gbbm} $u_0(x;a_0,E_0,c_0)$ and
notice that we can write the linearized spectral problem \eqref{linear} about $u_0$ in the form
\[
\mathcal{D}^{-1}\partial_x\mathcal{E}_0''(\phi)=\mu \phi
\]
where $\mathcal{E}_0(\phi)$ is an augmented energy functional defined by
\begin{align*}
\mathcal{E}_0(\phi)&=-\mathcal{E}(\phi)+c_0\mathcal{P}(\phi)+a_0\mathcal{M}(\phi)
\end{align*}
where $\mathcal{E}$, $\mathcal{P}$, and $\mathcal{M}$ are the energy, momentum, and energy functionals defined on $X$
defined by
\begin{align*}
\mathcal{E}(\phi)&=\int_0^T\left(\frac{1}{2}u(x)^2+F(u(x))\right)dx\\
\mathcal{M}(\phi)&=\int_0^Tu(x)dx\\
\mathcal{P}(\phi)&=\frac{1}{2}\int_0^T\left(\phi(x)^2+\phi_x(x)^2\right)dx.
\end{align*}
In particular, notice that each of these functionals are left invariant under spatial translations and hence
it is appropriate to study the stability of periodic solutions of \eqref{travelode} up to translation.
To this end, we introduce a semi-distance $\rho:X\times X\to\RM$ defined via
\[
\rho(\phi,\psi):=\inf_{\xi\in\RM}\|\phi(\cdot)-\psi(\cdot+\xi)\|_{X}
\]
and we seek conditions for which the following statement is true: If $\phi_0\in X$ is near $u_0$ as measured
by the semidistance $\rho$, then the solution $\phi(x,t)$ of \eqref{gbbm} with initial data $\phi_0$
stays close to a translate of $u_0$ for all time.
%
%
%There exists a $\varepsilon_0>0$ such that there exists
%if $\phi_0\in X$ satisfies $\rho(u_0,\phi_0)<\varepsilon$ for some $0<\varepsilon<\varepsilon_0$, then the solution
%$\phi(x,t)$ of \eqref{gbbm} with initial data $\phi_0$ satisfies $\rho(u_0,\phi(\cdot,t))<C_0\varepsilon$
%for some constant $C_0>0$.

We begin by noticing that $u_0(x;a_0,E_0,c_0)$ is a critical point of the augmented energy functional
$\mathcal{E}_0$.  In order to determine the nature of this critical point, it is necessary to analyze
the second derivative of $\mathcal{E}_0$.  If $\mathcal{E}_0''(u_0)\in\mathcal{L}(X,X^*)$ is positive
definite, then nonlinear stability follows by standard arguments.  However, one sees after an easy calculation
that
\[
\mathcal{E}_0''(u_0)=\mathcal{L}
\]
which is clearly not positive definite by the translation invariance of \eqref{gbbm}.  Indeed, since $\mathcal{L}u_{0,x}=0$
and $u_0$ is not monotone it follows by standard Sturm-Liouville arguments that zero is either the second
or third eigenvalue of $\mathcal{L}$ with respect to the natural ordering on $\RM$.  By Lemma 4.1 of [MJ1],
we it follows that $\mathcal{L}$ considered on $L^2_{\rm per}([0,T])$
has precisely one negative eigenvalue, a simple eigenvalue at zero, and the rest
of the spectrum is positive and bounded away from zero if $T_E>0$.  If $T_E\leq 0$, then either the
the null space or the number or negative eigenvalues jumps by one: a situation which can seemingly
not be handled by the present variational analysis\footnote{However, see the recent work of Bronski, Johnson, and Kapitula [BrJK].}.

Thus, assuming $T_E>0$ it follows that $u_0$ is a degenerate critical point of $\mathcal{E}_0$ with one unstable
direction and one neutral direction.  In order to get rid of the unstable direction, simply notice that the evolution
of \eqref{gbbm} does not occur on the entire space $X$, but on the codimension two subset
\[
\Sigma_0:=\{\phi\in X:\mathcal{M}(\phi)=M(a_0,E_0,c_0),\;\;\mathcal{P}(\phi)=P(a_0,E_0,c_0)\}.
\]
Clearly $\Sigma_0$ is a smooth submanifold of $X$ containing all translates of the function $u_0$.
Defining $\mathcal{T}_0$ to be the tangent space of $\Sigma_0$ at $u_0$, i.e.
\[
\mathcal{T}_0:=\{\phi\in X:\left<u_0,\phi\right>=\left<1,\phi\right>=0\},
\]
we have by Lemma 4.3 of [MJ1] that the quadratic form induced by $\mathcal{E}_0''(u_0)$ is positive definite
on $\mathcal{T}_0$ if $T_E>0$ and
\begin{equation}
\left<\mathcal{E}_0''(u_0)\phi_2,\phi_2\right>=-\{T,M\}_{a,E}\{T,M,P\}_{a,E,c}<0\label{ip1}
\end{equation}
where $\phi_2$ is defined as above.
Since the underlying periodic wave is spectrally unstable if $\{T,M,P\}_{a,E,c}<0$ by Theorem \ref{thm:fw},
the only interesting case in which \eqref{ip1} holds is when $\{T,M\}_{a,E}$ and $\{T,M,P\}_{a,E,c}$ are positive.  With these conditions
in mind, it follows that the augmented energy $\mathcal{E}_0$ is coercive on $\sigma_0$ near $u_0$ with respect
to the semi-distance $\rho$, from which nonlinear stability follows: see Proposition 4.2 and the corresponding
proof in \cite{MJ1} for details.  Summarizing, we have the following orbital stability result.

\begin{thm}\label{orbital}
Let $u(x;a_0,E_0,c_0)$ be a periodic solution of \eqref{travelode} such that the Jacobians
$T_E$, $\{T,M\}_{a,E}$, and $\{T,M,P\}_{a,E,c}$ are positive at $(a_0,E_0,c_0)$.  Then there exists
positive constants $C_0$ and $\varepsilon_0$ such that if $\phi_0\in X$ satisfies $\rho(\phi_0,u)<\varepsilon$
for some $\varepsilon<\varepsilon_0$, then the solution $\phi(x,t)$ of \eqref{gbbm} with initial data $\phi_0$
satisfies $\rho(\phi(\cdot,t),u)\leq C_0\varepsilon$.
\end{thm}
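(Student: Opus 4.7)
The plan is to follow the Grillakis-Shatah-Strauss variational framework for orbital stability of relative equilibria, adapted to the periodic setting as in Johnson [MJ1]. The central object is the augmented energy $\mathcal{E}_0 = -\mathcal{E} + c_0 \mathcal{P} + a_0 \mathcal{M}$, for which the traveling wave $u_0 := u(\cdot; a_0, E_0, c_0)$ is a critical point by virtue of the traveling wave ODE \eqref{travelode}. The goal is to show that $u_0$ is a constrained local minimizer of $\mathcal{E}_0$ on the manifold $\Sigma_0$ of fixed mass and momentum, modulo the translation symmetry. Orbital stability then follows by a Lyapunov argument, since $\mathcal{E}_0$, $\mathcal{M}$, and $\mathcal{P}$ are all conserved by the gBBM flow on $X$.

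First I would identify the $L^2_{\rm per}([0,T])$-spectrum of $\mathcal{E}_0''(u_0) = \mathcal{L}$. Since $\mathcal{L} u_{0,x} = 0$ with $u_{0,x}$ sign-changing, Sturm-Liouville theory forces zero to be either the second or third periodic eigenvalue of $\mathcal{L}$. Invoking Lemma 4.1 of [MJ1] under the hypothesis $T_E > 0$ pins this down: $\mathcal{L}$ admits precisely one simple negative eigenvalue, a simple zero with eigenfunction $u_{0,x}$, and the remainder of the spectrum strictly positive and bounded away from zero.

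Next I would establish coercivity of $\langle \mathcal{L}\phi, \phi\rangle$ on the admissible tangent space. The codimension-two constraint manifold $\Sigma_0$ has tangent space $\mathcal{T}_0 = \{\phi \in X : \langle u_0, \phi\rangle = \langle 1, \phi\rangle = 0\}$, and after factoring out the translation direction $u_{0,x}$ one needs $\mathcal{L}$ to be strictly positive on $\mathcal{T}_0 \cap \{u_{0,x}\}^\perp$. This is the content of Lemma 4.3 of [MJ1], and the decisive algebraic input is the identity $\langle \mathcal{L}\phi_2, \phi_2\rangle = -\{T,M\}_{a,E}\{T,M,P\}_{a,E,c}$. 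Under the theorem's hypotheses both Jacobians are positive, so this inner product is negative; moreover $\phi_2 \notin \mathcal{T}_0$, since its pairings with the constraint functionals $1$ and $\mathcal{D}u_0$ are $\{T,M\}_{a,E}$ and $\{T,P\}_{a,E}$, at least one of which is non-zero. Thus the single negative direction of $\mathcal{L}$ is precisely the one excised by the two constraints, and combined with the spectral gap at zero one obtains $\langle \mathcal{L}\phi, \phi\rangle \geq C\|\phi\|_X^2$ on $\mathcal{T}_0 \cap \{u_{0,x}\}^\perp$.

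Finally I would close the stability argument. Using the implicit function theorem, for any solution $\phi(\cdot,t)$ sufficiently close to the orbit of $u_0$ one selects a modulation parameter $\xi(t) \in \RM$ such that $\phi(\cdot,t) = u_0(\cdot + \xi(t)) + v(\cdot,t)$ with the orthogonality $\langle v(\cdot,t), u_{0,x}(\cdot + \xi(t))\rangle = 0$. Conservation of $\mathcal{M}$ and $\mathcal{P}$ forces $v$ to lie to leading order in the translated copy of $\mathcal{T}_0$, and Taylor expansion of $\mathcal{E}_0$ about $u_0$ yields $\mathcal{E}_0(\phi) - \mathcal{E}_0(u_0) \geq C\|v\|_X^2 - C'\|v\|_X^3$ by the coercivity above. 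A standard continuity/bootstrap argument then propagates smallness of $\|v\|_X$ for all time, giving the advertised bound $\rho(\phi(\cdot,t), u_0) \leq C_0 \varepsilon$. The main obstacle is the coercivity step: extracting positivity of $\mathcal{L}$ on the admissible subspace from the purely algebraic sign conditions on $T_E$, $\{T,M\}_{a,E}$, and $\{T,M,P\}_{a,E,c}$. Once this is established (as in the parallel gKdV analysis of [MJ1]), the remaining argument is template GSS/Lyapunov, modulo the standing assumption that $f$ is such that \eqref{gbbm} is globally well-posed on $X$ and the conservation laws genuinely persist.
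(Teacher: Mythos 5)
Your proposal follows essentially the same route as the paper: the Grillakis--Shatah--Strauss variational framework with the augmented energy $\mathcal{E}_0=-\mathcal{E}+c_0\mathcal{P}+a_0\mathcal{M}$, the spectral count for $\mathcal{L}$ from $T_E>0$ via Lemma 4.1 of \cite{MJ1}, coercivity on the constrained tangent space $\mathcal{T}_0$ via the identity $\left<\mathcal{L}\phi_2,\phi_2\right>=-\{T,M\}_{a,E}\{T,M,P\}_{a,E,c}$, and the standard Lyapunov/modulation closing argument. This matches the paper's (outlined) proof, which likewise defers the technical lemmas and the coercivity-to-stability step to \cite{MJ1}.
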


By Theorem \ref{orbital}, it follows that $\{T,M,P\}_{a,E,c}>0$ may not be sufficient for nonlinear stability
of a periodic traveling wave of \eqref{gbbm}.  In particular, notice that in the solitary wave theory it is always
true that the operator $\mathcal{L}$ has only one $L^2(\RM)$ eigenvalue, and hence it is always true that any unstable
eigenvalues of the linearized operator must be real.  In the periodic context, however, we see this is only
true if $T_E>0$, which is not true for all periodic traveling wave solutions of \eqref{gbbm}: for example,
it is clear that the cnoidal wave solutions of the modified BBM equation corresponding to \eqref{gbbm} with
$f(u)=u^3$ of sufficiently long wave period satisfy $T_E<0$.  Thus, it may be possible in certain
situations that the linearized operator $\mathcal{A}$ has unstable $T$-periodic eigenvalues which are not real.
Moreover, even if $T_E$ and $\{T,M,P\}_{a,E,c}$ are positive, and hence one has periodic spectral stability, it is not
clear from this analysis whether one has nonlinear stability since the sign of the Jacobian $\{T,M\}_{a,E}$
still plays a seemingly large role\footnote{However, one should be aware of the recent works \cite{DK} and \cite{BrJK} in which
perturbation methods and more delicate functional analysis were used to analyze this problem for the gKdV
in a way which provides more precise results than the above variational methods.}.
This phenomenon, which stands in contrast to the solitary wave theory,
is a reflection of how the periodic traveling wave solutions of \eqref{gbbm} have a much richer structure
than the solitary waves, allowing for possibly more interesting dynamics.

\section{Modulational Instability Analysis}

In this section, we begin our study of the spectral stability of periodic traveling wave solutions
of the gBBM equation \eqref{gbbm} to arbitrary localized perturbations.  In particular, our methods
will detect instabilities of such solutions to long wavelength perturbations, i.e. to slow modulations
of the underlying wave.  Such stability analysis seems to be a bit more physical than the
periodic stability analysis conducted in the previous section in the following sense: in physical applications,
one should probably never expect to find an exact spatially periodic wave.  Instead, what one often sees is a
solution which on small space-time scales seems to exhibit periodic behavior, but over larger scales is clearly
seen not to be periodic due to a slow variations in the amplitude, frequency, etc, i.e. one sees
slow modulations physical parameters defining the solution.  Thus, it seems natural to study the stability of
such solutions by idealizing them as exact spatially periodic waves and then study
%instead considering
the stability of this idealized wave to slow modulations in the underlying parameters.
This is precisely the goal of the modulational stability analysis in this paper.
Moreover, in terms of the long-time stability of such solutions it is clear that a low frequency analysis of the
linearized operator is vital to obtaining suitable bounds on the corresponding solution operator of the nonlinear
equation.

To begin, notice that by Lemma \ref{Evans1} the linearized operator $\mathcal{A}$ has a $T$-periodic eigenvalue
at the origin in the spectral plane of multiplicity (generically) three.  Thus, as we allow small variations
in the Floquet parameter, we expect that there will be three branches of continuous spectrum which bifurcate
from the origin.  According to Proposition \ref{globalspec}, one of these branches must be confined to the
imaginary axis, and hence will not contribute to any spectral instability.  In order to determine
if the other two branches bifurcate off the imaginary axis or not, we derive an asymptotic
expansion of the function $D(\mu,e^{i\kappa})$ for $|\mu|+|\kappa|\ll 1$.  As a result, we will
see that, to leading order, the local structure of the spectrum near the origin is governed
by a homogeneous polynomial of degree three in the variables $\mu$ and $\kappa$.  We then evaluate
the coefficients of the resulting polynomial in terms of Jacobians of various maps from the
parameters $(a,E,c)$ to the quantities $T$, $M$, and $P$.  To this end, we begin with the following Lemma.

\begin{lem}\label{vanish}
If $\{T,M,P\}_{a,E,c}\neq 0$, the equation $D(\mu,e^{i\kappa})=0$ has the following normal
form in a neighborhood of $(\mu,\kappa)=(0,0)$:
\begin{equation}
-(i\kappa)^3+\frac{(i\kappa)^2\mu T}{c}+\frac{i\kappa\mu^2}{2}\left(\tr\left(\MM_{\mu\mu}(0)\right)-\frac{T^2}{c^2}\right)
%            +\frac{\mu^3}{6}\left(2\tr\left(\MM_{\mu\mu\mu}(0)\right)+T^3-3T\tr\left(M_{\mu\mu}(0)\right)\right) + O(4)=0
             -\mu^3\{T,M,P\}_{a,E,c}+\mathcal{O}(|\mu|^4+\kappa^4)=0\label{evansnormalform}
\end{equation}
whose Newton diagram is depicted in Figure \ref{newton}.%, where $\mathcal{O}(4)$ represents terms
%of degree four and higher.
\end{lem}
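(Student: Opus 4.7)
The plan is to Taylor-expand the explicit cubic-in-$\lambda$ formula
\[
D(\mu, \lambda) = -\lambda^3 + a(\mu)\lambda^2 - a(-\mu)e^{\mu T/c}\lambda + e^{\mu T/c}
\]
from Proposition \ref{globalspec}(iv) jointly in $\mu$ and $\kappa$, feeding in Lemma \ref{Evans1} to pin down the $\lambda = 1$ slice and using two simple identities for $a(\mu) := \tr(\MM(\mu))$ to handle the remaining $\lambda$-derivatives.

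First I would establish that $a(0) = 3$ and $a'(0) = T/c$. For the former, the discussion preceding Lemma \ref{Evans1} shows $\UM(T,0) - \UM(0,0)$ is rank one; after conjugating to the standard basis at $x = 0$, this makes $\MM(0) - \IM$ rank one, so two of the three eigenvalues of $\MM(0)$ equal $1$. Combined with $\det(\MM(0)) = 1$ (from property (iii) at $\mu = 0$), the third eigenvalue is also $1$, giving $a(0) = 3$. For $a'(0) = T/c$, I would read off the coefficient of $\mu^1$ from $D(\mu,1) = -1 + a(\mu) - a(-\mu)e^{\mu T/c} + e^{\mu T/c}$ using $a(0) = 3$; this coefficient equals $2a'(0) - 2T/c$, and Lemma \ref{Evans1} forces it to vanish.

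Next, since $D(\mu, \cdot)$ is cubic in $\lambda$, the Taylor expansion about $\lambda = 1$ is exact:
\[
D(\mu, e^{i\kappa}) = D(\mu, 1) + D_\lambda(\mu, 1)\,\delta + \frac{1}{2}D_{\lambda\lambda}(\mu, 1)\,\delta^2 + \frac{1}{6}D_{\lambda\lambda\lambda}\,\delta^3,
\]
where $\delta := e^{i\kappa} - 1 = i\kappa + \mathcal{O}(\kappa^2)$. Direct differentiation gives $D_{\lambda\lambda\lambda} = -6$; $D_{\lambda\lambda}(\mu, 1) = -6 + 2a(\mu) = (2T/c)\mu + \mathcal{O}(\mu^2)$ upon using $a(0) = 3$ and $a'(0) = T/c$; and $D_\lambda(\mu, 1) = -3 + 2a(\mu) - a(-\mu)e^{\mu T/c}$, whose constant and $\mu^1$ coefficients vanish by the same two identities, leaving $D_\lambda(\mu, 1) = \frac{1}{2}(a''(0) - T^2/c^2)\mu^2 + \mathcal{O}(\mu^3)$. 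Substituting these together with $D(\mu,1) = -\{T,M,P\}_{a,E,c}\mu^3 + \mathcal{O}(\mu^4)$ from Lemma \ref{Evans1} into the four-term expansion and recognizing $a''(0) = \tr(\MM_{\mu\mu}(0))$ reproduces the normal form \eqref{evansnormalform}.

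The main obstacle is purely bookkeeping: one must check that every cross-monomial $\mu^j (i\kappa)^k$ with $j + k \le 3$ either appears in \eqref{evansnormalform} or is absorbed into the $\mathcal{O}(|\mu|^4 + \kappa^4)$ remainder. This reduces to verifying that each of the four pieces $D(\mu,1)$, $D_\lambda(\mu,1)\,\delta$, $\frac{1}{2}D_{\lambda\lambda}(\mu,1)\,\delta^2$, and $-\delta^3$ contributes exactly one leading monomial of total weight three, with subleading contributions of weight at least four---which is precisely what the cancellations $a(0) = 3$ and $a'(0) = T/c$ guarantee. No additional delicate computation is required.
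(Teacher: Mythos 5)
Your proposal is correct and follows essentially the same route as the paper: both expand $D(\mu,e^{i\kappa})$ in powers of $\eta=e^{i\kappa}-1$, use the $\mu\mapsto-\mu$ symmetry (which you access through the explicit formula of Proposition \ref{globalspec}(iv) rather than re-deriving the coefficient relations as the paper does) to obtain $a(0)=3$, $a'(0)=T/c$, and the $\mu^2$ coefficient $\tfrac{1}{2}\bigl(\tr(\MM_{\mu\mu}(0))-T^2/c^2\bigr)$, and then insert Lemma \ref{Evans1} for the $\lambda=1$ slice. Your explicit justification of $a(0)=3$ from the rank-one structure of $\MM(0)-\IM$ together with $\det\MM(0)=1$ is a welcome detail that the paper leaves implicit.
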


\begin{figure}
\begin{center}
\includegraphics[scale=.65]{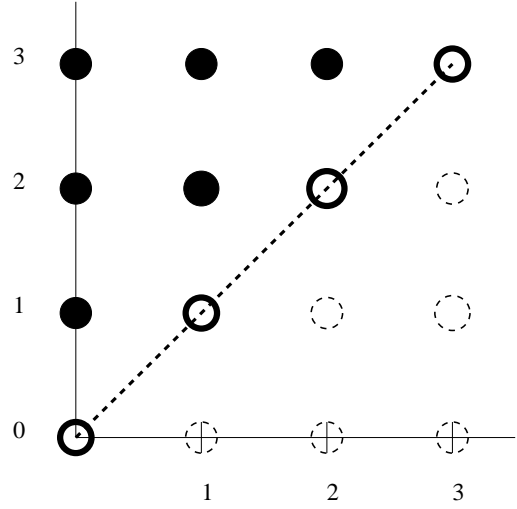}
\end{center}
 \caption{The Newton diagram corresponding to the asymptotic
expansion of $D(\mu,e^{i\kappa})=0$ in a neighborhood of
$(\mu,\kappa)=(0,0)$ is shown to $O(|\mu|^3+\kappa^3)$.  Terms associated
to open circles with dashed boundary are shown to vanish due to the natural symmetries
inherent in \eqref{linear}.  The open circles with dark boundary
are non-vanishing terms which are a part of the lower convex hull, and hence
contribute to the dominant balance.
The closed dark circles lie above the lower convex hull and
thus do not contribute to the leading order asymptotics.  See \cite{Baum} and \cite{BrJ} for more details.}
\label{newton}
\end{figure}

\begin{proof}
Define functions $a$ and $b$ on a neighborhood of $\mu=0$ by
\begin{equation}
D(\mu,e^{i\kappa})=%\det[(\MM(\mu)-I)-(e^{i\kappa}-1)I]=
-\eta^3+(a(\mu)-3)\eta^2+b(\mu)\eta+D(\mu,1).\label{charpoly}
\end{equation}
where $\eta=e^{i\kappa}-1$ is small.  In particular, notice that
\begin{eqnarray*}
a(\mu)&=& \tr(\MM(\mu))\\ %=3+\mu \tr(\MM_{\mu}(0))+\frac{\mu^2}{2}\tr(\MM_{\mu\mu}(0))+\frac{\mu^3}{6}\tr(\MM_{\mu\mu\mu}(0))+\mathcal{O}(|\mu|^4)\\
b(\mu)&=& \frac{1}{2}\left(\tr((\MM(\mu)-I)^2)-\tr(\MM(\mu)-I)^2\right).%=-\mu M^{(1)}_{3,2}\sigma-\mu^2\left(\frac{1}{2} \tr(\MM_\mu)^2 -\frac{1}{2} \tr(\MM_\mu^2) -\frac{\sigma}{2}\tilde M_{3,2}\right)+\mathcal{O}(|\mu|^3)\\
%c(\mu)&=& \det(\MM(\mu)-I)=-\sigma
%(M^{(1)}_{1,1}M^{(1)}_{3,2}-M^{(1)}_{3,1}M^{(1)}_{1,2})\mu^2+\left(\det(\MM_{\mu}(0))+\sigma
%S\right)\mu^3+\mathcal{O}(|\mu|^4)
\end{eqnarray*}
%From the Hamiltonian symmetry of \eqref{linear}, we have that
%\begin{eqnarray*}
%c(\mu)&=&\det\left(\MM(\mu)-I\right)\\
%&=&\det\left(\MM(-\mu)^{-1}-I\right)\\
%&=&-e^{-\mu T} c(-\mu).
%\end{eqnarray*}
%It immediately follows that $c''(0)=-2\sigma (M^{(1)}_{1,1}M^{(1)}_{3,2}-M^{(1)}_{3,1}M^{(1)}_{1,2})=0$
%so that $c(\mu)=\mathcal{O}(|\mu|^3)$.
Now, using the fact that the spectral problem \eqref{linear} is invariant under the transformation $(x,\mu)\mapsto(-x,-\mu)$,
it follows that the matrices $\MM(-\mu)$ and $\MM(\mu)^{-1}$ are similar for all $\mu\in\CM$.  In particular,
it follows that
\begin{align*}
e^{-\mu T}\det(M(\mu)&-\lambda )=-\lambda^3 \det\left(M(-\mu)-\frac{1}{\lambda}\right)\\
&=-\lambda^3\left(\left(1-\frac{1}{\lambda}\right)^3+\left(a(-\mu)-3\right)\left(\frac{1}{\lambda}-1\right)^2
      +b(-\mu)\left(\frac{1}{\lambda}-1\right)+D(-\mu,1)\right)\\
&=-\left(\lambda-1\right)^3-\left(a(-\mu)-3\right)\lambda\left(\lambda-1\right)^2
        +b(-\mu)\lambda^2\left(\lambda-1\right)-\lambda^3D(-\mu,1)
\end{align*}
By comparing the $\mathcal{O}(\lambda^2)$ and $\mathcal{O}(\lambda^3)$ terms above to those in \eqref{charpoly},
we have the relations
\[
\left\{
  \begin{array}{ll}
    e^{-\mu T/c}a(\mu)=2a(-\mu)-b(-\mu)-3,\\
    -e^{-\mu T/c}=-a(-\mu)+b(-\mu)-D(-\mu,1)+2.
  \end{array}
\right.
\]
Differentiating with respect to $\mu$ and evaluating at $\mu=0$ immediately implies $b'(0)=0$ and $a'(0)=\frac{T}{c}$.
Similarly, it follows that $b''(0)=a''(0)-\frac{T^2}{c^2}=\tr\left(\MM_{\mu\mu}(0)\right)-\frac{T^2}{c^2}$.  The proof is now
complete by Lemma \ref{Evans1} and the fact that $\eta=i\kappa+\mathcal{O}(\kappa^2)$.
\end{proof}

It follows that the structure of $\spec\left(\mathcal{A}\right)$ in a neighborhood of the origin is, to leading order,
determined by the above homogeneous polynomial in $\kappa$ and $\mu$.  Due to the triple root of $D(\cdot,1)$ at $\mu=0$
the implicit function theorem fails, but can be trivially corrected by considering the appropriate
change of variables.  %trivial way which can be easily corrected.
This leads us to the following theorem giving a modulational stability index for traveling wave solutions of \eqref{gbbm}.

\begin{thm}\label{thm:modstable}
With the above notation, define
\begin{align*}
%\Delta\left(f;u\right)&=&\left(\frac{\tr(\MM_{\mu\mu}(0))-T^2}{2}\right)^2T^2-4T^3\left(\frac{D_{\mu\mu\mu}(0,1)}{6}\right)
%      -4\left(\frac{\tr(\MM_{\mu\mu}(0))-T^2}{2}\right)^3\\
%      &-&27\left(\frac{D_{\mu\mu\mu}(0,1)}{6}\right)^2
%      +18T\left(\frac{D_{\mu\mu\mu}(0,1)}{6}\right)\left(\frac{\tr(\MM_{\mu\mu}(0))-T^2}{2}\right)
%
%\Delta\left(f;u\right)&=&\left(\frac{\tr(\MM_{\mu\mu}(0))-T^2}{2}\right)^2T^2-4T^3\left(\frac{D_{\mu\mu\mu}(0,1)}{6}\right)
%      +4\left(\frac{\tr(\MM_{\mu\mu}(0))-T^2}{2}\right)^3\\
%      &-&27\left(\frac{D_{\mu\mu\mu}(0,1)}{6}\right)^2
%      -18T\left(\frac{D_{\mu\mu\mu}(0,1)}{6}\right)\left(\frac{\tr(\MM_{\mu\mu}(0))-T^2}{2}\right)
%
%\Delta&=&\left(\frac{\tr(\MM_{\mu\mu}(0))-(T/c)^2}{2}\right)^2T^2+4T^3\left(\frac{1}{2}\{T,M,P\}_{a,E,c}\right)
%      +4\left(\frac{\tr(\MM_{\mu\mu}(0))-T^2}{2}\right)^3\\
%      &-&27\left(\frac{1}{2}\{T,M,P\}_{a,E,c}\right)^2
%      +18T\left(\frac{1}{2}\{T,M,P\}_{a,E,c}\right)\left(\frac{\tr(\MM_{\mu\mu}(0))-T^2}{2}\right)
%
\Delta&=
\frac{1}{4}\left(\tr(\MM_{\mu\mu}(0))-\left(\frac{T}{c}\right)^2\right)^2\left(2\tr(\MM_{\mu\mu}(0))-\left(\frac{T}{c}\right)^2\right)\\
     %\left(\frac{T}{c}\right)^2
           % -4\left(\frac{T}{c}\right)^3\left(\frac{D_{\mu\mu\mu}(0,1)}{6}\right)\\
     % +4\left(\frac{\tr(\MM_{\mu\mu}(0))-\left(\frac{T}{c}\right)^2}{2}\right)^3\\
      &-27\{T,M,P\}_{a,E,c}^2
      +6\{T,M,P\}_{a,E,c}\left(\frac{3\tr(\MM_{\mu\mu}(0))}{2}-\frac{5\left(\frac{T}{c}\right)^2}{3}\right)\left(\frac{T}{c}\right)
\end{align*}
%where $f$ denotes the dependence on the nonlinearity in \eqref{gbbm},
and suppose that $\{T,M,P\}_{a,E,c}\neq 0$.
If $\Delta>0$, then the spectrum of the linearized operator $\mathcal{A}$ in a neighborhood of the origin
consists of the imaginary axis with a triple covering.  If $\Delta<0$, then $\sigma(\mathcal{A})$ in a neighborhood
of the origin consists of the imaginary axis with multiplicity one together with two curves which are tangent to lines through the origin.
%with angle determined by the roots of \eqref{evansnormalform}.
\end{thm}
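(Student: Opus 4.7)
The plan is to read off the behavior of the three branches of spectrum bifurcating from the origin directly from the normal form supplied by Lemma~\ref{vanish}. Writing
\[
P(\mu,i\kappa) := -(i\kappa)^3 + \frac{T}{c}(i\kappa)^2\mu + \tfrac{1}{2}\Bigl(\tr(\MM_{\mu\mu}(0)) - \tfrac{T^2}{c^2}\Bigr)(i\kappa)\mu^2 - \{T,M,P\}_{a,E,c}\,\mu^3,
\]
the equation $D(\mu,e^{i\kappa})=0$ reduces, to leading order, to $P(\mu,i\kappa)=0$, with corrections of strictly higher weight $O(|\mu|^4+\kappa^4)$. Since $P$ is homogeneous of degree three and the top-degree coefficient $\{T,M,P\}_{a,E,c}$ is nonzero by hypothesis, every solution branch of $P=0$ through the origin has the form $\mu = z\,(i\kappa)$ for some bounded $z$, and dividing by $(i\kappa)^3$ produces the auxiliary cubic
\[
q(z) := -\{T,M,P\}_{a,E,c}\, z^3 + \tfrac{1}{2}\Bigl(\tr(\MM_{\mu\mu}(0)) - \tfrac{T^2}{c^2}\Bigr)z^2 + \tfrac{T}{c}\,z - 1,
\]
whose three roots are precisely the admissible slopes of the three spectral branches emanating from $\mu=0$.

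A direct computation of the classical cubic discriminant of $q$ reproduces, after elementary rearrangement, the quantity $\Delta$ displayed in the statement of the theorem (up to an overall positive normalizing factor). Classical root theory for real cubics then delivers the desired dichotomy: $\Delta>0$ yields three distinct real roots $z_1,z_2,z_3$, while $\Delta<0$ yields one real root $z_1$ together with a complex conjugate pair $z_3=\overline{z_2}$. The lifting step is then to pass from the roots of $q$ to actual branches of the full equation $D(\mu,e^{i\kappa})=0$. Provided $\Delta\neq 0$, each root of $q$ is simple, so a standard Newton-diagram / implicit-function-theorem argument (see \cite{Baum} and the analogous treatment in \cite{BrJ}) produces three analytic branches $\mu_j(\kappa) = z_j\,(i\kappa) + O(\kappa^2)$ of the exact Evans relation near $\kappa=0$; the Newton polygon displayed in Figure~\ref{newton} is precisely what certifies that under the homogeneous balance $\mu\sim\kappa$ the remainder $O(|\mu|^4+\kappa^4)$ is of strictly higher weight and therefore does not disturb the leading-order slopes.

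With the branches in hand the geometric description is immediate. Each real slope $z_j$ gives a branch $\mu_j(\kappa)$ that is purely imaginary to leading order and hence contributes a simple covering of $i\RM$ near the origin; three real slopes therefore produce the triple covering claimed when $\Delta>0$. Each complex slope $z_j$, by contrast, gives $\mu_j(\kappa)$ with nonzero real part, whose image is a curve tangent at the origin to the line spanned by $iz_j$; the Hamiltonian symmetry recorded in Proposition~\ref{globalspec} forces the complex conjugate pair to produce mirror-image curves, and the remaining real root provides the imaginary-axis branch guaranteed by $i\RM\subset\spec(\mathcal{A})$. This is exactly the spectral picture claimed when $\Delta<0$. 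The main technical point to get right is the Newton-polygon bookkeeping verifying that the $O(|\mu|^4+\kappa^4)$ remainder really is subdominant along the relevant edge of the polygon; the hypothesis $\{T,M,P\}_{a,E,c}\neq 0$ enters precisely here, as it keeps the top-degree coefficient of $q$ nonzero and hence all three slopes finite, so the lower convex hull of Figure~\ref{newton} consists entirely of the cubic terms and the dominant balance is unambiguous.
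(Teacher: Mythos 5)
Your argument is correct and is essentially the paper's own proof: the paper likewise passes to a projective coordinate ($y=i\mu/\kappa$, i.e.\ your $z=-y$), reads the three slopes off as roots of the resulting real cubic whose discriminant is $\Delta$, and applies the implicit function theorem at the (simple, since $\Delta\neq0$) roots to lift them to branches $\mu_j(\kappa)$ of the exact Evans relation. The one point your writeup elides is that when $\Delta>0$ three real slopes only make the branches \emph{tangent} to $i\RM$ at the origin; as you do in the $\Delta<0$ case, one must invoke the reflection symmetry of $\spec(\mathcal{A})$ across the imaginary axis from Proposition~\ref{globalspec} to conclude that these branches in fact lie on $i\RM$ and hence give the claimed triple covering.
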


\begin{proof}
Since, to leading order in $\mu$ and $\kappa$, the Evans function is homogeneous by Lemma \ref{vanish} it seems
natural to work with the projective coordinate $y=\frac{i\mu}{\kappa}$.  Making such a change of variables,
Lemma \ref{vanish} implies the equation $D(\mu,e^{i\kappa})=0$ can be written as
\begin{equation}
%1+yT-\frac{y^2}{2}\left(\tr\left(\MM_{\mu\mu}(0)-T^2\right)\right)+\frac{y^3}{6}D_{\mu\mu\mu}(0,1)+\kappa E(\kappa,y)=0\label{evansprojective}
%1+yT-\frac{y^2}{2}\left(\tr\left(\MM_{\mu\mu}(0)-T^2\right)\right)-\frac{y^3}{2}\{T,M,P\}_{a,E,c}+\kappa E(\kappa,y)=0\label{evansprojective}
1+\frac{yT}{c}-\frac{y^2}{2}\left(\tr\left(\MM_{\mu\mu}(0)\right)-\left(\frac{T}{c}\right)^2\right)-y^3\{T,M,P\}_{a,E,c}+
\kappa E(\kappa,y)=0\label{evansprojective}
\end{equation}
where $E(\kappa,y)$ is continuous in a neighborhood of the origin.  Let $y_{1,2,3}$ denote the three roots
of the above cubic in $y$ corresponding to $E(\kappa,y)=0$.  Assuming $\Delta\neq 0$ it follows
that $y_{1,2,3}$ are distinct and hence the implicit function theorem applies giving three distinct
solutions of \eqref{evansprojective} in a neighborhood of each of the $y_{1,2,3}$.  In terms of the
original variable $\mu$, this gives three solution branches
\[
\mu_{1,2,3}=-iy_{1,2,3}\kappa+\mathcal{O}(\kappa^2).
\]
If $\Delta>0$, then $y_{1,2,3}\in\RM$, giving three branches of spectrum emerging from the origin tangent to the imaginary axis.
From the Hamiltonian symmetry of \eqref{linear}, the spectrum is symmetric with respect to reflections across the imaginary
axis and hence $\Delta>0$ implies these thee branches of spectrum must in fact lie on the imaginary axis, proving the existence
of an interval of spectrum of multiplicity three on the imaginary axis.  In the case $\Delta<0$, it follows that one of the roots,
$y_1$ say, is real while the other two $y_{2,3}$ occur in a complex conjugate pairs, giving one branch
along the imaginary axis and two branches emerging from the origin tangent to lines through the origin with angel $\arg(-iy_{2,3})$:
see Figure \ref{localnormalform}.
\end{proof}

\begin{figure}
\begin{center}
\includegraphics[scale=.35]{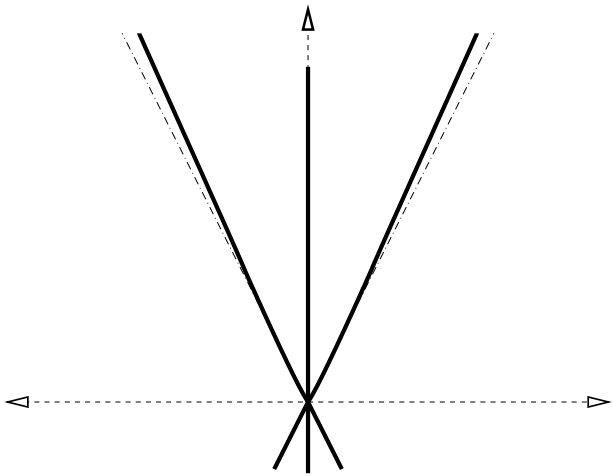}
\end{center}
 \caption{When $\Delta<0$, the local normal form of
$\spec(\mathcal{A})$ consists of a segment of the imaginary
axis union with two straight lines making equal angles with the
imaginary axis.  Notice that these lines intersect at the origin,
corresponding to the fact that $D(\mu,1)=\mathcal{O}(|\mu|^3)$.
In this picture, the horizontal dashed line represents the real axis,
and the dashed lines which are tangent to the straight dark lines represent the
true spectrum of the linearized operator.}\label{localnormalform}
\end{figure}

\begin{rem}
The modulational instability index $\Delta$ derived above is considerably more complicated than the one
derived by Bronski and Johnson \cite{BrJ} for generalized Korteweg-de Vries equation
\begin{equation}
u_t=u_{xxx}-cu_{x}+\left(f(u)\right)_x\label{gkdv}.
\end{equation}
When considering the spectral stability of periodic traveling wave solutions of \eqref{gkdv}, it was shown
that there exists a modulational instability index $\Delta_{gKdV}$ such that $\Delta_{gKdV}<0$ implies
modulational instability, and $\Delta_{gKdV}>0$ implies modulational stability.  In this case,
the dominant balance is somewhat simpler due to the fact that the trace of the operator $\MM_{\mu}(0)$
vanishes, and hence the $\kappa^2\mu$ term in the corresponding Newton diagram vanishes.
As a result, the modulational instability index took the form
\[
\Delta_{gKdV}=\frac{1}{2}\left(\tr(\MM_{\mu\mu}(0))\right)^3-\frac{1}{3}\left(\frac{3}{2}\{T,M,P\}_{a,E,c})\right)^2,
\]
where here $D$ represents the Evans function for equation \eqref{gkdv}.
It follows that the sign of $D_{\mu\mu\mu}(0,1)$ does not effect the modulational instability of such
periodic solutions of equation \eqref{gkdv}.  However, in the case of the gBBM equation, the fact
that $a'(0)\neq 0$ seems to suggest that the sign of the sign of this orientation index (see next section)
has an impact on the modulational stability of the periodic traveling wave.  In fact, in section 4 we prove
exactly this fact in the case of a power-nonlinearity $f(u)=u^{p+1}/(p+1)$: we prove the long-wavelegth
periodic traveling wave solutions of \eqref{gbbm} are modulationally unstable if and only if the limiting
homoclinic orbit (solitary wave) is exponentially unstable.
\end{rem}

Our next goal is to use the integrable nature of \eqref{travelode} to express $\tr(\MM_{\mu\mu}(0))$
in terms of the underlying periodic traveling wave $u$.  This is the content of the following lemma.

%
%This can be done very explicitly by using the Hamiltonian
%symmetry $\MM(\mu)\sim \MM(-\mu)^{-1}$ for equation \eqref{linear}.  Notice that while we have chosen to express the coefficients
%of $\Delta$ in terms of $\tr(\MM_{\mu\mu})(0)$ and $D_{\mu\mu\mu}(0,1)$, which suggests they
%arise at second and third order in a perturbation calculation for small $\mu$, these quantities can be
%expressed in terms of quantities which arise at first and second order in $\mu$ due to the invariance
%of the problem under the map $\mu\to -\mu$, $x\to -x$.  Moreover, it is interesting to note that while
%all first order terms contribute, only a few terms arising at second order actually contribute: these are terms
%associated with the minors of the off-diagonal piece of the unperturbed Jordan form.  These second order terms
%are computable via a single quadrature, and can be found explicitly by a first order perturbation
%argument.

\begin{lem}\label{EvalIndex}
We have the following identity:
\begin{align*}
%D_{\mu\mu\mu}(0,1)&=-3\{T,M,P\}_{a,E,c},\\
%-\frac{2}{c}\tr(\MM_{\mu\mu}(0,1))&=&c^3\left(\{M,P\}_{a,E}+\{P,T\}_{E,a}u_{-}+\{T,M\}_{a,E}u_{-}^2\right)\\
%\frac{1}{c}\tr(\MM_{\mu\mu}(0,1))&=&c^3\left(\{M,P\}_{a,E}+\{P,T\}_{E,a}u_{-}+\{T,M\}_{a,E}u_{-}^2\right)\\
%&+&c^2\{P,T\}_{a,E}V'(u_{-})+V'(u_{-})\left(2\{M,T\}_{E,a}+\{P,T\}_{E,a}\right)\\
%&+&c\left(2\{T,M\}_{c,a}+\{P,T\}_{c,E}+\{P,T\}_{E,a}u_{-}+\{M,T\}_{E,a}u_{-}^2\right)
%-\frac{1}{c}\tr(\MM_{\mu\mu}(0))&=&c\{P,T\}_{E,c}+\left(\{2\{M,T\}_{a,E}+\{P,T\}_{a,E}\right)V'(u_{-})\\
%&+&c(c-1)\left(\{P,T\}_{a,E}u_{-}+\{M,T\}_{E,a}u_{-}^2\right)\\
%&+&c^3\{M,P\}_{a,E}+2c\{M,T\}_{a,c}.
%
%c\tr\left(\MM_{\mu\mu}(0)\right)&=c\left(\{T,P\}_{E,c}+2\{M,P\}_{a,E}+\frac{2}{c}\left(M_aT-T_aM\right)\right)-2V'(u_{-})\{T,M\}_{a,E}.
%
\frac{1}{2}\tr\left(\MM_{\mu\mu}(0)\right)&=\{T,P\}_{E,c}+2\{M,P\}_{a,E}-V'(u_{-})\{T,M\}_{a,E}.
\end{align*}
%where $\{g,h\}_{\alpha,\beta}$ represents the Jacobian of the transformation $(g,h)\mapsto(\alpha,\beta)$, and $\{g,h,r\}_{\alpha,\beta,\gamma}$
%is defined similarly, and $M=M(a,E,c)$ is the functional $\int_0^Tu\;dx$, which represents the mass of the underlying periodic
%traveling wave.
\end{lem}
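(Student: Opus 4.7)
}
The strategy is to extend the perturbation expansion developed in the proof of Lemma \ref{Evans1} one order further in $\mu$, then extract the trace of the second-order correction to the monodromy. Since $\WM(0,\mu)=\UM(0,0)$ is independent of $\mu$, we have $\MM(\mu)=\WM(T,\mu)\UM(0,0)^{-1}$, so
\[
\tr\!\left(\MM_{\mu\mu}(0)\right)\;=\;2\,\tr\!\left(\WM^{2}(T)\UM(0,0)^{-1}\right),
\]
where $\WM^{2}(T)$ is the coefficient of $\mu^{2}$ in the Taylor expansion of $\WM(T,\mu)$. The goal becomes the explicit evaluation of $\WM^{2}(T)$.

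First, I would record the first-order correction $w_{j}^{1}$ for each column. For $j=1$ the integrable structure of Lemma \ref{kernel} applies: writing $w_{1}^{1}=Y_{4}+k_{12}Y_{2}+k_{13}Y_{3}$ with constants determined by $w_{1}^{1}(0)=0$ gives the $u_{c}$-term exactly as in Lemma \ref{Evans1}. For $j=2,3$, the variation-of-parameters formula \eqref{variation} produces integrals of $\mathcal{D}u_{a}$ and $\mathcal{D}u_{E}$ weighted by $u_{x},u_{a},u_{E}$, via the identities $c\{u,u_{x}\}_{E,x}=-1$ and $c\{u,u_{x}\}_{x,a}=u$. A second application of variation of parameters to $(w_{j}^{2})_{x}=\HM(x,0)w_{j}^{2}+V_{j}^{1}$ with $V_{j}^{1}=(0,0,-c^{-1}\mathcal{D}(w_{j}^{1})_{1})^{t}$ then gives $w_{j}^{2}(T)$ as nested integrals of these same building blocks.

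Next, $\tr\!\left(\WM^{2}(T)\UM(0,0)^{-1}\right)$ is assembled by expanding each $w_{j}^{2}(T)$ in the basis $\{Y_{1}(0),Y_{2}(0),Y_{3}(0)\}$ of columns of $\UM(0,0)$ and reading off the $j$-th coordinate; the cyclic property of the trace and integration by parts (with $\mathcal{D}$ self-adjoint under the $L^{2}_{\mathrm{per}}([0,T])$ pairing) reduce everything to a sum of single integrals of products drawn from $\{1,u,u_{x},u_{a},u_{E},u_{c},\mathcal{D}u,\mathcal{D}u_{a},\mathcal{D}u_{E}\}$ over one period. The final step is to recognise each such integral as a derivative of one of the integral representations
\[
T=\int_{0}^{T}dx,\qquad M=\int_{0}^{T}u\,dx,\qquad P=\tfrac{1}{2}\int_{0}^{T}\bigl(u^{2}+u_{x}^{2}\bigr)dx
\]
with respect to a pair of parameters, which after grouping yields the Jacobians $\{T,P\}_{E,c}$, $\{M,P\}_{a,E}$ and $\{T,M\}_{a,E}$ in the claimed linear combination.

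The main obstacle is the careful bookkeeping of boundary contributions: $u_{-}=u(0;a,E,c)$ depends on all three parameters, so differentiation of $T,M,P$ under the integral sign produces, in addition to bulk terms, boundary contributions that couple to the initial conditions \eqref{initial1}--\eqref{initial3} through the relation $cu_{xx}(0)=-V'(u_{-})$. Tracking precisely how these boundary pieces combine with the bulk integrals is what produces the coefficient $-V'(u_{-})$ in front of $\{T,M\}_{a,E}$ and must be organised with the same care used for the $V'(u_{-})P(u_{-})$ term that appeared in the proof of Lemma \ref{Evans1}. Once these contributions are collected and the identities $T_{a}=M_{E}$, $T_{c}=P_{E}$ are invoked to eliminate redundant Jacobians, the identity of the lemma emerges.
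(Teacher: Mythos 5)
Your starting identity $\tr(\MM_{\mu\mu}(0))=2\tr\bigl(\WM^{2}(T)\UM(0,0)^{-1}\bigr)$ is correct and your plan would in principle reach the stated formula, but it is a genuinely different computational route from the paper's. The paper does not differentiate the monodromy twice; it evaluates the second elementary symmetric function instead, writing $\tr(\MM_{\mu\mu}(0))$ as $-\tfrac{2}{\mu^{2}}\tr\bigl(\cof(\delta\WM(\mu)\WM(0,0)^{-1})\bigr)\big|_{\mu=0}$, i.e.\ it works with the coefficient $b(\mu)$ of Lemma \ref{vanish}, where the symmetry relation $b''(0)=a''(0)-T^{2}/c^{2}$ was already established. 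The payoff is that the $\mu^{2}$-coefficient of the cofactor trace is built from $2\times 2$ minors of the \emph{first-order} data of $\delta\WM(\mu)$ already tabulated in Lemma \ref{Evans1} --- each entry a single integral, hence a parameter derivative of $T$, $M$, $P$ --- plus a single contraction against the rank-one matrix $\MM(0)-{\bf I}$, so the Jacobians appear directly as those minors. Your route needs the full second-order corrections $w_{j}^{2}(T)$ for all three columns, which are nested double integrals $\int_{0}^{T}f\int_{0}^{x}g$; such objects do not individually reduce to products of single integrals (only symmetrized combinations do, via $\int_{0}^{T}f\int_{0}^{x}g+\int_{0}^{T}g\int_{0}^{x}f=\int_{0}^{T}f\cdot\int_{0}^{T}g$), and verifying that the diagonal coefficients entering your trace symmetrize correctly and produce the boundary coefficient $-V'(u_{-})$ is where essentially all of the work lies --- a step your sketch asserts rather than performs (though, to be fair, the paper's own proof is equally terse at that point). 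If you carry out your version, passing first to the cofactor formulation will keep the bookkeeping at the level of quantities you already possess from Lemma \ref{Evans1}.
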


%\begin{rem}
%The above formula for $\tr(\MM_{\mu\mu}(0))$ differs from that for the gKdV by the addition of the last two terms on the right hand side.
%The $\left(M_aT-T_aM\right)$ appears from the explicit dependence on $c$ in the formula $P=2cK_c$, while the $V'(u_{-})\{T,M\}_{a,E}$
%terms enters via the $V'(u_{-})$ in the formula for $w_{1}^1$ below.  Neither of these terms are required in the analogous
%modulational theory for the gKdV equation considered in [BrJ].
%\end{rem}

\begin{proof}
This proof is essentially an extension of that of Lemma \ref{Evans1}.  Using the same notation, a straightforward
yet tedious calculation yields
\begin{align*}
\tr(\MM_{\mu\mu}(0))&=-\frac{2}{\mu^2}\tr\left({\rm cof}\left(\delta\WM(\mu)\WM(0,0)^{-1}\right)\right)\big{|}_{\mu=0}\\
%&=\{T,P\}_{E,c}+\{M,P\}_{a,E}+2\{M,T\}_{a,c}-\frac{1}{c}\{T,M\}_{a,E}V'(u_{-}).
&=2\{T,P\}_{E,c}+2\{M,P\}_{a,E}+2\{T,M\}_{a,c}-\frac{2}{c}V'(u_{-})\{T,M\}_{a,E}
\end{align*}
The proof is completed by noting that $\{T,M\}_{a,c}=\{M,P\}_{E,a}$ since $T_c=P_E$ and $M_c=P_a$.%
%\[
%%\{M,P\}_{a,E}+2\{M,T\}_{a,c}=2\{M,P\}_{a,E}+\frac{2}{c}\left(M_aT-T_aM\right)
%\frac{1}{2}\tr\left(\MM_{\mu\mu}(0)\right)=\{T,P\}_{E,c}+2\{M,P\}_{a,E}-\frac{1}{c}V'(u_{-})\{T,M\}_{a,E}
%\]
%which completes the proof.
\end{proof}

Thus, the modulational stability of a given periodic traveling wave solution to \eqref{gbbm} can be determined
from information about the underlying solution itself.  Interestingly, from Lemma \ref{EvalIndex} the modulational
instability index seems to depend directly on the turning point $u_{-}$, a feature not seen in the corresponding
analysis of the generalized KdV equation in \cite{BrJ}.  However, one can trace this dependence back to the definition
of $w_1^1$ in Lemma \ref{Evans1} and hence seems to be unavoidable.  In the next section, we will analyze the
above stability indices in neighborhoods of homoclinic orbits in phase space, complimenting the results of
Gardner \cite{Gard} by providing stability results in this limit.

\section{Analysis of Stability Indices in the Solitary Wave Limit}

The goal of this section is to study the long wavelength asymptotics of the stability indices
derived in the previous sectios.  Throughout, we restrict ourselves to the case of a power-nonlinearity
$f(u)=u^{p+1}/(p+1)$: this restriction is vital to our calculation since in this case we gain an additional
scaling symmetry.  In particular, if $v(x;a,E)$ satisfies the differential equation
\begin{equation}
\frac{1}{2}v_x^2-v^2+\frac{1}{(p+1)(p+2)}v^{p+2}=au+E,\label{eqn:v}
\end{equation}
then a straight forward calculation shows that we can express the periodic solution $u(x;a,E,c)$ as
\begin{equation}
u(x;a,E,c)=(c-1)^{1/p}v\left(\left(\frac{c-1}{c}\right)^{1/2}x;\frac{a}{c^{1+1/p}},\frac{E}{c^{1+2/p}}\right).\label{scaling:gbbm}
\end{equation}
This additional scaling allows explicit calculations of $P_c$, which ends up determining the stability
of periodic traveling wave solutions of \eqref{gbbm} of sufficiently long wavelength.

A reasonable guess would be that long-wavelength periodic traveling
wave solutions of \eqref{gbbm} have the same stability properties as the limiting homoclinic
orbit (solitary wave).  However, as noted in the introduction, this is a highly singular limit
and so it is not immediately clear whether such results are true.  It is well known that the solitary wave
is spectrally unstable if and only if $p>4$ and $1<c<c_0(p)$ for some critical wave speed $c_0(p)$.
It follows from the work of Gardner \cite{Gard} %into the stability of long-wavelength solutions of \eqref{gbbm}
that periodic waves below the homoclinic orbit in phase space
which are sufficiently close to the homoclinic orbit are unstable if the solitary wave
is unstable.  In particular, it is proved that the linearized operator $\mathcal{A}$ for the
periodic traveling wave $u$ with sufficiently long wavelength has a ``loop" of spectrum
in the neighborhood of any unstable eigenvalues of the limiting solitary wave.  Thus, Gardner's
analysis deduces instability of long wavelength periodic waves from instability of the limiting
solitary waves.  The results of this section compliment this theory by also proving that the
stability of the limiting wave is inherited by nearby periodic waves below the homoclinic orbit
in phase space.

In terms of the finite-wavelength instability index, it seems reasonable by
Theorem \ref{thm:fw} to expect that for periodic traveling wave solutions below the seperatrix
of sufficiently long wavelength, $\{T,M,P\}_{a,E,c}<0$ for if and only if $p>4$ and $1<c<c_0(p)$.
What is unclear is whether such a result should be true for the modulational instability
index $\Delta$.  Indeed, although Gardner's results prove that the spectrum of the linearization
about a periodic traveling wave of sufficiently long wavelength in the neighborhood of the origin
contains the image of a continuous map of the unit circle, to our knowledge it has never
been proved that this map is injective.  Thus, it is not clear from Gardner's results whether
a modulational instability will arise from this eigenvalue since it is possible this ``loop" is confined
to the imaginary axis: we show that in fact one has modulational instability in this limit precisely
when the limiting solitary wave is unstable.

The main result for this section is the following theorem, which is
%
%The following theorem proves a modulational instability does occur
%when ever the limiting solitary wave is unstable, and answers the corresponding question
%for the finite wavelength instability index.  This theorem is
based on asymptotic estimates
of the instability indices derived in section 3.  In particular,
we prove the sign of both instability indices in the solitary wave limit is determined by the sign
of $\frac{\partial}{\partial c}P$, where $P=P(a,E,c)$ is the momentum of the periodic wave $u(x;a,E,c)$.
The proof is based on a more technical lemma, which shows that %$\frac{\partial}{\partial E}T(a,E,c)>0$ and
$\frac{\partial}{\partial a}M(a,E,c)<0$ for waves of sufficiently long wavelength, i.e. for $a,E$ sufficiently
close to zero.  We begin by outlining the proof of the following theorem, and then fill in the necessary lemma's
afterward.

\begin{thm}\label{solitarylim}
Let $f(u)=u^{p+1}$ for some $p\geq 1$ and let $u(x;a,E,c)$, with $a$ sufficiently small, be a periodic solution of the traveling
wave ODE \eqref{travelode} which corresponds to an orbit below the homoclinic orbit in phase space.
If $T(a,E,c)$ is sufficiently large, then $u$ is modulationally and nonlinearly stable for all $1\leq p<4$.
For $p>4$, there exists a critical wave speed $c(p)>1$ such that if $T(a,E,c)$ is sufficiently large,
the solution $u$ is orbitally and modulationally stable if $c>c(p)$, while it is
modulationally unstable for $1<c<c(p)$.
\end{thm}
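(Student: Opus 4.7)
The plan is to reduce both stability indices to the sign of $P_c$ in the long--wavelength limit, and then to identify $\mathrm{sgn}(P_c)$ with the classical solitary wave stability exponent $\mathrm{sgn}(\mathcal{N}'(c))$. The scaling identity \eqref{scaling:gbbm} is the principal tool: differentiating it in $c$ at fixed $a,E$ yields an algebraic identity expressing $P_c$ as a linear combination of $aP_a$, $EP_E$, and the momentum of the normalized profile $v(x;a,E)$ defined by \eqref{eqn:v}. Evaluating on the solitary wave locus $a=0$ and using the standard relation between $E$ and $c$ along solitary waves recovers $\mathrm{sgn}(P_c)|_{a=0}=\mathrm{sgn}(\mathcal{N}'(c))$, which by the classical gBBM solitary wave theory is positive iff either $1\le p<4$, or $p>4$ and $c>c(p)$. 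By continuity this sign persists for $(a,E)$ in a small neighborhood of the separatrix.

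For the finite--wavelength index $\{T,M,P\}_{a,E,c}$ I would expand the determinant along the $c$--column as $P_c\{T,M\}_{a,E}-M_c\{T,P\}_{a,E}+T_c\{M,P\}_{a,E}$. Using the scaling identity to rewrite $M_c$ and $T_c$ in terms of $a$-- and $E$--derivatives, the last two terms remain bounded as the wave approaches the separatrix, while the first carries the logarithmic divergence through $T$. The assumed lemma $M_a<0$ for long--wavelength waves, together with a direct check that $T_E>0$ from the period integral near the separatrix, pins down $\mathrm{sgn}\{T,M\}_{a,E}>0$, whence $\mathrm{sgn}\{T,M,P\}_{a,E,c}=\mathrm{sgn}(P_c)$. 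Theorem \ref{thm:fw} and Theorem \ref{orbital} then give nonlinear stability in the regime where $\mathcal{N}'(c)>0$, and spectral (hence nonlinear) instability in the regime where $\mathcal{N}'(c)<0$, matching the statement.

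For the modulational index $\Delta$ from Theorem \ref{thm:modstable}, I would substitute the expression for $\tfrac12\tr(\MM_{\mu\mu}(0))$ from Lemma \ref{EvalIndex} and then work with the projective cubic in $y=i\mu/\kappa$ used in the proof of Theorem \ref{thm:modstable}. Each coefficient of that cubic is a Jacobian that can be expanded in the long--wavelength limit by combining the scaling identity with separatrix integrals. The anticipated outcome is that to leading order as $T\to\infty$ the discriminant $\Delta$ factors as a positive power of $T$ times $P_c$ times a positive quantity built out of separatrix integrals, so that $\mathrm{sgn}(\Delta)$ also agrees with $\mathrm{sgn}(P_c)$ in this limit; the stated dichotomy for $\Delta$ then follows from the same solitary wave computation as above.

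The main obstacle is the bookkeeping in this final step: $\Delta$ is a cubic/quartic combination of $\tr(\MM_{\mu\mu}(0))$, $T/c$, and $\{T,M,P\}_{a,E,c}$, and naive substitution produces many terms of comparable order in $T$ whose cancellation depends sensitively on the scaling identity and on the secondary identities $T_c=P_E$, $T_a=M_E$, $M_c=P_a$ coming from the generating function \eqref{action}. I would organize the expansion by first isolating the dominant power of $T$ in each Jacobian using the near--separatrix asymptotics of the period, mass, and momentum integrals, then verify that all sub--leading terms cancel or contribute with the same sign as the $P_c$ term. Once this is done, the theorem follows by packaging Theorem \ref{thm:fw}, Theorem \ref{orbital}, and Theorem \ref{thm:modstable} together with the sign of $\mathcal{N}'(c)$ dictated by the power nonlinearity.
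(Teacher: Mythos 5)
Your skeleton matches the paper's: reduce both indices to $\mathrm{sgn}(P_c)$ near the separatrix, then identify that sign with the solitary-wave momentum index. But three load-bearing steps are missing or would fail as written. First, in the cofactor expansion $P_c\{T,M\}_{a,E}-M_c\{T,P\}_{a,E}+T_c\{M,P\}_{a,E}$ the claim that the last two terms ``remain bounded'' while the first ``carries the logarithmic divergence through $T$'' is not correct: setting $\epsilon:=a^2-2(c-1)E$, one has $T_E=\mathcal{O}(1/\epsilon)$, $T_a=M_E=\mathcal{O}(a/\epsilon)$, $M_a=\mathcal{O}(a^2/\epsilon)$, $T_c=2P_E+T/c=\mathcal{O}(E/\epsilon)$, so the minors diverge algebraically (only $T$ and $M$ are merely logarithmic), and the dominant contribution $-T_EM_aP_c$ is itself of order $a^2/\epsilon^2$. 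Isolating it is a comparison of these explicit rates combined with the scaling identities that express $M_c$ and $P_c$ through $a$- and $E$-derivatives; it cannot be settled by a bounded-versus-divergent dichotomy. Second, $M_a<0$ is not a harmless assumption: it is a substantive lemma (proved in the paper by rescaling $u\mapsto\widetilde{r}(\alpha,c)\,u$ in the mass integral and checking monotonicity of the integrand in $\alpha$), and without it the sign of $-T_EM_aP_c$, hence of $\{T,M,P\}_{a,E,c}$, is undetermined.

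The largest gap is the modulational index. The discriminant $\Delta$ does not factor as a power of $T$ times $P_c$ times a positive quantity; the actual mechanism runs through Lemma \ref{EvalIndex}, which gives $\tfrac12\tr\left(\MM_{\mu\mu}(0)\right)=\{T,P\}_{E,c}+2\{M,P\}_{a,E}-V'(u_{-})\{T,M\}_{a,E}$, whose dominant piece in the limit is $T_EP_c$. Since $T_E\to+\infty$ for orbits below the separatrix, the term $\tfrac12\bigl(\tr(\MM_{\mu\mu}(0))\bigr)^3\sim\tfrac12\,T_E^3P_c^3$ dominates the $-27\{T,M,P\}_{a,E,c}^2$ term and the cross term, so $\mathrm{sgn}(\Delta)=\mathrm{sgn}(P_c)$; without that trace identity there is no way to compare the orders of the three contributions to $\Delta$, which is exactly the obstacle you flag but do not resolve. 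Finally, your route to $\mathrm{sgn}(P_c)=\mathrm{sgn}(\mathcal{N}'(c))$ via ``the standard relation between $E$ and $c$ along solitary waves'' is not meaningful, since the solitary locus is $a=E=0$ for every $c$; the paper instead passes to the limit inside the momentum integrals by dominated convergence against the $\sech^{2/p}$ profile, obtaining an explicit expression for $\lim P_c$ in terms of $I(4/p)=\int\sech^{4/p}$ whose sign change at $c_0(p)$ reproduces the classical threshold. So the overall plan is right, but the rate comparison, the proof that $M_a<0$, and the identity for $\tr(\MM_{\mu\mu}(0))$ are the actual content of the proof and are absent here.
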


%
%
%\begin{thm}\label{solitarylim}
%In the case of power non-linearity $f(u)=u^{p+1}/(p+1)$, there always exist unstable periodic
%traveling waves in a neighborhood of the solitary wave ($a,E=0$) if $p>4$ and $1<c<c_0(p)$,
%where $c_0(p)$ is the critical wavespeed determined by the non-linearity.
%Moreover, periodic traveling wave solutions to \eqref{gbbm} of sufficiently long wavelength
%exhibit a modulational instability if and only if $p>4$ and $1<c<c_0(p)$.
%\end{thm}

\begin{proof}
Without loss of generality, suppose that the periodic solution $u(x;a,E,c)$ of \eqref{travelode}
corresponds to a branch cut of the function $\sqrt{E-V(u;a,c)}$ with positive right end point.
Now, when $a$ and $E$ are small there are two turning points $r_1,r_2$ in the
neighborhood of the origin and a third turning point $r_3$ which
is bounded away from the origin.
In the solitary wave limit $a,E\to 0$ a straight forward calculation
gives that $r_2-r_1=\mathcal{O}\left(\sqrt{a^2-2(c-1)E}\right)$
From this, it follows that the period satisfies the asymptotic relation
\begin{equation}
T(a,E,c)=\mathcal{O}\left(\ln\left(a^2-2(c-1)E\right)\right).\label{largeperiod}
\end{equation}
To see this, notice that by our assumptions on $r_1$, $r_2$, and $r_3$
we can write $E-V(u;a,1)=(r-r_1)(r-r_2)(r_3-r)Q(r)$ where $Q(r)$ is positive on the set $[r_1,r_3]$.
The period can then be expressed as
\begin{align*}
T(a,E,c)&=\sqrt{2c}\int_{r_2}^{r_3}\frac{dr}{\sqrt{(r-r_1)(r-r_2)(r_3-r)Q(r)}}\\
&=\sqrt{2c}\int_{r_2-r_1}^{r_2-r_1+\delta}\frac{dr}{\sqrt{r(r-(r_2-r_1))(r_3-r_1-r)Q(r+r_1)}}\\
&+\sqrt{2c}\int_{r_2-r_1+\delta}^{r_3-r_1}\frac{dr}{\sqrt{r(r-(r_2-r_1))(r_3-r_1-r)Q(r+r_1)}}
%&=\sqrt{2}\int_{r_2-r_1}^{r_3-r_1}\frac{dr}{\sqrt{r(r-(r_2-r_1))}}\\
%&+\sqrt{2}\int_{r_2-r_1}^{r_3-r_1}\frac{1-\sqrt{(r_3-r_1-r)Q(r+r_1)}}{\sqrt{r(r-(r_2-r_1))(r_3-r_1-r)Q(r+r_1)}}\;dr
\end{align*}
The integral over the set $(r_2-r_1+\delta,r_3-r_1)$ is clearly $\mathcal{O}(1)$ as $(a,E)\to(0,0)$.
For the other integral, notice that
\[
(r_3-r_1-r)Q(r+r_1)>0
\]
on the set $[r_2-r_1,r_2-r_1+\delta]$.  Thus, in the limit as $(a,E)\to\infty$ we have
\begin{align*}
T(a,E,c)&\sim\int_{r_2-r_1}^{r_2-r_1+\delta}\frac{dr}{\sqrt{r(r-(r_2-r_1))}}\\
&=-4\ln(4(r_2-r_1))+2\ln(2(\sqrt{r_2-r_1+\delta}+\sqrt{\delta}))\\
&\sim-\ln(r_2-r_1)
\end{align*}
from which \eqref{largeperiod} follows.  Similar computations yield the following asymptotic relations
for $a$ and $E$ sufficiently small:
%When $a$ and $E$ are sufficiently small, there exist two solutions $r_{1}<r_{2}$ of $E=V(x;a,c)$ in a neighborhood of the origin,
%and a third solution $r_3$ bounded away from the origin.  In the solitary wave limit $a,E\to 0$ a straight forward calculation
%gives that $r_2-r_1=\mathcal{O}\left(\sqrt{a^2-2(c-1)E}\right)$.  As in chapter 2, we thus have the following asymptotics for
%small $a$ and $E$:
\begin{align*}
P(a,E,c)&=\mathcal{O}(1)\\
M(a,E,c)&=\mathcal{O}\left(a\ln\left(a^2-2(c-1)E\right)\right)\\
M_a(a,E,c)&=\mathcal{O}\left(\frac{a^2}{a^2-2(c-1)E}\right)\\
%T(a,E,c)&=\mathcal{O}\left(\ln\left(a^2-2(c-1)E\right)\right)\\
T_a(a,E,c)&=\mathcal{O}\left(\frac{a}{a^2-2(c-1)E}\right)=M_E(a,E,c)\\
T_E(a,E,c)&=\mathcal{O}\left(\frac{1}{a^2-2(c-1)E}\right)\\
T_c(a,E,c)&=\mathcal{O}\left(\frac{E}{a^2-2(c-1)E}\right)=2P_E+\frac{1}{c}T.
\end{align*}
Thanks to the  above scaling we know $M_c=2P_a+\frac{1}{c}T$ can be expressed as a linear combination
of $M$, $M_a$, and $M_E=T_a$.  Similarly, $P_c$ can be expressed as a linear combination of $P$, $2P_a=M_c-\frac{1}{c}T$ and $P_E$.
It follows that the asymptotically largest minor of $\{T,M,P\}_{a,E,c}$ for $a$ and $E$ small is $-T_E M_a P_c$ and, moreover,
\[
\tr\left(\MM_{\mu\mu}(0)\right)\sim T_E P_c.
\]
In Lemma \ref{lem:mass} below, we will show that $M_a<0$ for such $a$ and $E$.  Moreover, it is clear that for such
periodic waves with sufficiently long wavelength satisfy $T_E>0$.  Therefore, it follows
%
%
%
%Since $M_a<0$ and $T_E>0$ by Lemmas \ref{lem:mass} and \ref{periodLem} for $a$ and $E$ sufficiently small,
%it follows
that both stability indices are determined by the sign of $P_c(a,E,c)$ in the solitary wave limit.
The theorem now follows by Lemma \ref{lem:momentum} below.
%The theorem now follows by Theorems \ref{thm:modstable} and \ref{thm:fw}
%(proving $M_a<0$ and $T_E>0$), and the results of Lemmas \ref{lem:momentum} and \ref{lem:mass}.
\end{proof}

\begin{rem}
Notice that the fact that the finite wavelength instability index is determined by the sign of $P_c$ in the solitary
wave limit is not surprising, since this is exactly what detects the stability of the limiting solitary waves.  What is
surprising is that the same quantity controls the modulational stability index in the same limit.  As mentioned
above, it has not been know if the instability of the limiting solitary wave forces a modulational instability:
the answer is shown to be affirmative by Theorem \ref{solitarylim}.
\end{rem}

In order to complete the proof Theorem \ref{solitarylim}, we must prove a few more technical lemmas.  The first
is used in showing that the sign of the modulational and finite-wavelength instability indices
are determined completely by the sign of $P_c(a,E,c)$ in the limit as $a,E$ tend to zero.  This is the
result of the following lemma.

\begin{lem}\label{lem:mass}
For $a,E$ sufficiently small, $M_a(a,E,c)\leq 0$ for all $c>1$.
\end{lem}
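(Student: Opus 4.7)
The plan is to exploit a simple algebraic identity between $M$, $T$, and $N:=\int_0^T f(u)\,dx$ obtained by integrating the reduced traveling-wave equation once, and then to reduce the sign of $M_a$ to comparing $T$ against two easily controlled remainder terms.

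First I would integrate the equation $cu_{xx}-(c-1)u+f(u)=a$ (the first line of \eqref{quad2}) over a full period $[0,T]$. Periodicity of $u_x$ annihilates the $cu_{xx}$ contribution, leaving the algebraic identity
\begin{equation*}
(c-1)\,M(a,E,c)\;=\;N(a,E,c)\;-\;a\,T(a,E,c).
\end{equation*}
Differentiating in $a$ at fixed $E,c$ yields
\begin{equation*}
(c-1)\,M_a\;=\;N_a\;-\;T\;-\;a\,T_a,
\end{equation*}
and the remainder of the argument consists of showing that, as $(a,E)\to(0,0)$ along periodic orbits below the homoclinic separatrix, the $-T$ term dominates. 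The proof of Theorem~\ref{solitarylim} already provides $T=\mathcal{O}(-\ln(a^2-2(c-1)E))\to+\infty$ and $T_a=\mathcal{O}(a/(a^2-2(c-1)E))$, from which $|a\,T_a|=\mathcal{O}(a^2/(a^2-2(c-1)E))$ is bounded in the relevant regime.

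It therefore remains to show that $N_a$ stays bounded. For this I would repeat the asymptotic analysis applied to $M$, $T$, $T_a$, and $T_E$ in the proof of Theorem~\ref{solitarylim}, splitting the contribution to $N$ into the long quiet phase near the saddle-side turning point $r_2\to 0$ and the bounded excursion near $r_3$. The essential simplification for $N$ is that the integrand $f(u)=u^{p+1}/(p+1)$ vanishes to order $p+1$ at $u=0$, so the quiet-phase contribution is of order $r_2^{p+1}T_{\rm near}$; its $a$-derivative is controlled by the combinations $r_2^{p}\,T_{\rm near}\,\partial_a r_2$ and $r_2^{p+1}\,\partial_a T_{\rm near}$, both of which remain bounded for every $p\ge 1$ upon using $r_2=\mathcal{O}(\sqrt{a^2+|E|})$, $|\partial_a r_2|=\mathcal{O}(1)$, $T_{\rm near}=\mathcal{O}(|\ln(a^2-2(c-1)E)|)$, and $|\partial_a T_{\rm near}|=\mathcal{O}(a/(a^2-2(c-1)E))$. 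The excursion contribution converges as $(a,E)\to 0$ to the smooth function $N_{\rm sol}(a,c):=\int f(u_{\rm sol}(\cdot;a,c))\,dx$, yielding a bounded $a$-derivative. Combining the bounds gives $(c-1)M_a=\mathcal{O}(1)-T+\mathcal{O}(1)\to-\infty$, so $M_a(a,E,c)<0$ for all sufficiently small $(a,E)$ and every fixed $c>1$. The main technical obstacle is precisely this uniform bound on $N_a$: without the regularizing factor $u^{p+1}$ one would inherit the same logarithmically divergent quiet-phase contribution as appears in $T$, and the identity would give no information on the sign of $M_a$.
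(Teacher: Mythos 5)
Your route is genuinely different from the paper's. The paper works directly with the quadrature formula for the mass along the slice $E=0$, rescales the integration variable by the turning point so that the integral runs over the fixed interval $[0,1]$, and shows the resulting integrand is pointwise decreasing in $a$; the sign of $M_a$ is thus obtained without any reference to the divergence of the period. Your identity $(c-1)M=N-aT$, obtained by integrating the first line of \eqref{quad2} over a period, is correct and in some ways more illuminating: it exhibits $-T/(c-1)$ as the leading behavior of $M_a$ in the solitary-wave limit, which one can confirm independently from $M_a=-\tfrac{1}{2}\sqrt{2c}\oint u^{2}\left(2(E-V)\right)^{-3/2}du$, whose logarithmic divergence comes from the intermediate region $\sqrt{a^{2}-2(c-1)E}\ll u\ll 1$ where $E-V\approx\tfrac{c-1}{2}u^{2}$. (This is in fact sharper than the estimate $M_a=\mathcal{O}\bigl(a^{2}/(a^{2}-2(c-1)E)\bigr)$ listed in the proof of Theorem \ref{solitarylim}, and it buys an interpretation the paper's computation does not: $M_a<0$ because the mass constraint $(c-1)M=N-aT$ forces $M_a$ to track $-T$.)

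The gap is uniformity in the two-parameter limit. Writing $\epsilon:=\sqrt{a^{2}-2(c-1)E}$, your bounds $aT_a=\mathcal{O}(a^{2}/\epsilon^{2})=\mathcal{O}(1)$, $|\partial_a r_2|=\mathcal{O}(1)$, and hence the boundedness of $N_a$, hold only when $|a|\lesssim\epsilon$; this covers $E=0$ (where $\epsilon=|a|$) and $a=0$, but not the portion of the region where $E$ approaches the saddle energy $\approx a^{2}/2(c-1)$ with $a\neq 0$ fixed and small. There $\epsilon\to 0$ with $a$ fixed, the near turning point tends to $-a/(c-1)\neq 0$, $\partial_a r_2\sim a/((c-1)\epsilon)$ blows up, and both $aT_a$ and $N_a$ are of size $a^{2}/\epsilon^{2}\gg T\sim\ln(1/\epsilon)$, so the $-T$ term no longer dominates and your argument yields no sign information. (The conclusion survives there, but because $-aT_a\sim-Ca^{2}\epsilon^{-2}$ becomes the dominant \emph{negative} term and $N_a$ is smaller by a power of $a$; establishing this requires tracking the signs of the divergent pieces rather than bounding them.) To be fair, the paper's own proof also only treats the slice $E=0$, so you are not alone in this; but as written your "combining the bounds" step is valid only along $|a|\lesssim\epsilon$, and you should either restrict the claim to that regime (which is all that Theorem \ref{solitarylim} uses) or add the sign analysis of $aT_a$ and $N_a$ in the complementary regime.
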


\begin{proof}
Notice it is sufficient to prove $\frac{\partial}{\partial a}M(a,0,c)\leq 0$ for all $c>1$ and $a$ sufficiently
small.  Now, $M(a,0,c)$ can be written as
\[
M(a,0,c)=\sqrt{2c} \int_0^{r(a,c)}\frac{\sqrt{u}\;du}{\sqrt{a+\frac{c-1}{2}u-\frac{1}{(p+1)(p+2)}u^{p+1}}}
\]
where $r(a,c)$ is the smallest positive root of the polynomial equation $a+\frac{c-1}{2}r-\frac{1}{(p+1)(p+2)}r^{p+1}=0$.
Setting $a=((p+1)(p+2))^{1/p}\alpha$, we have
\[
M\left(\alpha,E,c\right)=((p+1)(p+2))^{1/p}\sqrt{2c}\int_0^{\widetilde{r}(\alpha,c)}\frac{\sqrt{u}\;du}{\sqrt{\alpha+\frac{c-1}{2}u-u^{p+1}}}
\]
where $\widetilde{r}(\alpha,c)$ is the smallest positive root of the polynomial $\alpha+\frac{c-1}{2}r-r^{p+1}=0$.
Notice for a fixed wave speed $c>1$, $\widetilde{r}(\alpha,c)$ is a smooth function of $\alpha$ for $\alpha$
sufficiently small and satisfies
\[
\widetilde{r}(\alpha,c)=\left(\frac{c-1}{2}\right)^{1/p}+\frac{2a}{p(c-1)}+\mathcal{O}(a^2).
\]
The goal is to now rewrite the above integral over a fixed domain and show that the integrand is
a decreasing function of $\alpha$ for a fixed $c>1$.

Making the substitution $u\to \widetilde{r}(\alpha,c)u$ yields the expression
\[
\frac{M(\alpha,0,c)}{((p+1)(p+2))^{1/p}\sqrt{2c}}=\int_0^1\frac{\sqrt{u}\;du}{\sqrt{\alpha\;\widetilde{r}(\alpha,c)^{-3}
         +\left(\frac{c-1}{2}\right)u\;\widetilde{r}(\alpha,c)^{-2}
                      -\widetilde{r}(\alpha,c)^{p-2}\;u^{p+1}}}
\]
Now, we can use the above expansion of $r(a,c)$ to conclude that
%\begin{eqnarray*}
%\frac{\partial}{\partial a}\big{|}_{a=0}\left(\frac{a}{r(a,c)^3}+\left(\frac{c-1}{2}\right)\frac{u}{r(a,c)^2}
%                      -\frac{1}{(p+1)(p+2)}r(a,c)^{p-2}u^{p+1}\right)=\\
% r(0,c)^{-3}\left(1-\frac{2r(0,c)^2}{p}u-\frac{2(p-2)r(0,c)^p}{(p+1)(p+2)p(c-1)}u^{p+1}\right)
%\end{eqnarray*}
\begin{align*}
\frac{\partial}{\partial \alpha}\big{|}_{\alpha=0}\left(\alpha\;\widetilde{r}(\alpha,c)^{-3}
         +\left(\frac{c-1}{2}\right)u\;\widetilde{r}(\alpha,c)^{-2}
                      -\widetilde{r}(\alpha,c)^{p-2}\;u^{p+1}\right)
\end{align*}
is positive on the open interval $(0,1)$ for all $p\geq 1$ and $c>1$, which completes the proof.
\end{proof}

Finally, it is left to analyze the %
%
%To complete the proof of Theorem \ref{solitarylim}, we must understand the
asymptotic behavior for a fixed
wavespeed $c>1$ of the quantity $P_c(a,E,c)$ in the solitary wave limit.  This is the content of the following
lemma.

\begin{lem}\label{lem:momentum}
In the case of power non-linearity $f(u)=u^{p+1}/(p+1)$, we the momentum $P=P(a,E,c)$ satisfies
\[
\frac{\partial }{\partial c} P\left(a,E,c\right)=\frac{(c-1)^{2/p-1/2}c^{1/2}I\left(\frac{4}{p}\right)}{2pc(c-1)}
                        \left(4c-p+\frac{(4c+p)(c-1)p}{(4+p)c}\right) + \mathcal{O}(|a|+|E|)
\]
in the solitary wave limit $(a,E)\to(0,0)$, where $I(r)=\int_{-\infty}^{\infty}\sech^r(x)dx$.  In particular, for $a$
and $E$ sufficiently small, if $p<4$ then
$\frac{\partial }{\partial c} P\left(a,E,c\right)>0$ for all $c>1$ while if $p>4$ then
$\frac{\partial }{\partial c} P\left(a,E,c\right)<0$ for $1<c<c_0(p)$ and $\frac{\partial }{\partial c} P\left(u;a,E,c\right)>0$
for $c>c_0(p)$, where
\[
c_0(p)=\frac{p\left(1+\sqrt{2+\frac{1}{2}p}\;\right)}{4+2p}.
\]
\end{lem}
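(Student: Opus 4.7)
The plan is to exploit the scaling relation \eqref{scaling:gbbm}, which reduces the computation of $P(a, E, c)$ in the solitary wave limit to a calculation involving the $c$-independent rescaled wave $v_0$ satisfying \eqref{eqn:v} with $a = E = 0$. Since the momentum is $C^1$ in $(a, E, c)$ (using the regularization of the branch-point integrals described in Section 2), it suffices to compute $\partial_c P(0, 0, c)$ explicitly and absorb the perturbation into the $\mathcal{O}(|a| + |E|)$ remainder. At $(a, E) = (0, 0)$, the rescaled profile is the standard sech-type solitary wave $v_0(y) = A \sech^{2/p}(By)$, with $A^p = (p+1)(p+2)$ and $B$ determined by substitution into the ODE \eqref{eqn:v}.

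Applying the scaling and changing variables $y = \sqrt{(c-1)/c}\,x$ in the momentum integral yields
\[
P(0,0,c) = \tfrac{1}{2}(c-1)^{2/p - 1/2}c^{1/2}\int v_0^{\,2}\,dy + \tfrac{1}{2}(c-1)^{2/p+1/2}c^{-1/2}\int (v_0')^2\,dy.
\]
The first integral is a constant multiple of $I(4/p)$ via the substitution $z = By$. For the second, use the first integral $v_y^2 = 2v^2 - \tfrac{2}{(p+1)(p+2)} v^{p+2}$ to write $\int (v_0')^2\,dy$ in terms of $\int v_0^{\,2}\,dy$ and $\int v_0^{\,p+2}\,dy$; the latter reduces to $I(2 + 4/p)$, which the recursion $I(r+2) = \tfrac{r}{r+1}I(r)$ (obtained by integrating the derivative of $\sech^r(x)\tanh(x)$) identifies as $\tfrac{4}{p+4}I(4/p)$. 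After using $A^p = (p+1)(p+2)$, the $v_0'$ integral collapses to $\tfrac{2p}{p+4}$ times the $v_0^{\,2}$ integral, so the $c$-dependence of $P(0,0,c)$ factors cleanly. Differentiating each of $(c-1)^{2/p - 1/2}c^{1/2}$ and $(c-1)^{2/p + 1/2}c^{-1/2}$ in $c$ produces factors of $(4c-p)$ and $(4c+p)$ respectively, and combining the two contributions under a common factor of $(c-1)^{2/p-1/2}c^{1/2}/[2pc(c-1)]$ yields exactly the bracket $(4c - p) + \tfrac{p(c-1)(4c+p)}{(p+4)c}$ asserted by the lemma.

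For the sign analysis, clearing the denominator $(p+4)c > 0$ in that bracket reduces its sign to that of the quadratic
\[
Q(c) := 8(p+2)c^2 - 8pc - p^2.
\]
The quadratic formula gives its positive root as $c_0(p) = p(1 + \sqrt{2 + p/2})/(2p+4)$ (the other root is strictly negative since $\sqrt{2(p+4)} > 2$ for $p \geq 1$). A direct calculation shows $c_0(p) > 1$ is equivalent to $(p-4)(p+2) > 0$, i.e.\ to $p > 4$. Therefore, for $1 \leq p < 4$ we have $c_0(p) < 1$, and $Q(c) > 0$ on $(1,\infty)$, giving $P_c > 0$ throughout; for $p > 4$, $Q$ changes sign at $c_0(p)$, yielding $P_c < 0$ for $1 < c < c_0(p)$ and $P_c > 0$ for $c > c_0(p)$. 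The main bookkeeping obstacle lies in reconciling the two prefactors $C_1(c)$ and $C_2(c)$ after differentiation and showing that the coefficient of $I(4/p)$ simplifies into exactly the form stated; the subsequent sign analysis is elementary algebra.
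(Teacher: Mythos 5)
Your strategy is exactly the paper's: use the scaling \eqref{scaling:gbbm} to pull the $c$-dependence out of $P$ as the two prefactors $(c-1)^{2/p-1/2}c^{1/2}$ and $(c-1)^{2/p+1/2}c^{-1/2}$, evaluate the remaining $v$-integrals in the limit $(a,E)\to(0,0)$ in terms of $I(4/p)$ via the $\sech$ profile and the reduction $I(r+2)=\tfrac{r}{r+1}I(r)$, differentiate in $c$, and reduce the sign question to the quadratic $8(p+2)c^2-8pc-p^2$. Your logarithmic-derivative computation of the two prefactors (producing $4c-p$ and $4c+p$ over $2pc(c-1)$) and your root analysis of $Q(c)$, including the check that $c_0(p)>1$ iff $p>4$, are correct and match the paper.

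There is, however, one concrete inconsistency you should resolve. You compute $\int (v_0')^2 = \tfrac{2p}{p+4}\int v_0^2$ from the first integral $v_y^2 = 2v^2 - \tfrac{2}{(p+1)(p+2)}v^{p+2}$ with $A^p=(p+1)(p+2)$, but then assert the bracket $(4c-p)+\tfrac{p(c-1)(4c+p)}{(p+4)c}$, which requires the ratio $\tfrac{p}{p+4}$; with your ratio the second term would carry an extra factor of $2$ and the critical speed would come out different. The source of the discrepancy is that equation \eqref{eqn:v} as printed is not consistent with the scaling \eqref{scaling:gbbm}: substituting \eqref{scaling:gbbm} into \eqref{quad2} gives $\tfrac12 v_y^2-\tfrac12 v^2+\tfrac{1}{(p+1)(p+2)}v^{p+2}=\cdots$, i.e.\ $v_y^2=v^2-\tfrac{2}{(p+1)(p+2)}v^{p+2}$, whence $A^p=\tfrac{(p+1)(p+2)}{2}$ and $B=\tfrac{p}{2}$ (this is the profile the paper actually uses) and the ratio is $\tfrac{p}{p+4}$, which does yield the stated bracket. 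With that normalization fixed your argument goes through as the paper's does. One further small point the paper attends to and you do not: since $P$ is an integral over $[0,T]$ with $T=T(a,E,c)$, differentiating in $c$ produces a boundary term of size $T_c u_-^2$, which must be (and is) checked to be $\mathcal{O}(|a|+|E|)$ before the formal differentiation of the asymptotic expansion is legitimate.
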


\begin{proof}
The proof is based on scaling and a limiting argument, as well as a modification of the analysis
in \cite{PW1}.  To begin, let $v=v(x;a,E)$ satisfy the differential equation
\eqref{eqn:v} so that $u(x;a,E,c)$ can be expressed via scaling as in \eqref{scaling:gbbm}, and assume with out loss of generality
that $x=0$ be an absolute max of $v(x;a,E,c)$.
%\[
%\frac{1}{2}v_x^2-v^2+\frac{1}{(p+1)(p+2)}v^{p+2}=au+E
%\]
%and notice then that
%\begin{equation}
%u(x;a,E,c)=(c-1)^{1/p}v\left(\left(\frac{c-1}{c}\right)^{1/2}x;\frac{a}{c^{1+1/p}},\frac{E}{c^{1+2/p}}\right).\label{scaling}
%\end{equation}
Clearly, the solitary wave limit corresponds to taking $(a,E)\to(0,0)$ with fixed wave speed $c>1$.  Notice that
on any compact subset $\Gamma$ of $\RM$, we have
\[
v(x;a,E)\to \left(\frac{(p+2)(p+1)}{2}\right)^{1/p}\sech^{2/p}\left(\frac{p}{2}x+x_0\right)
\]
uniformly as $(a,E)\to(0,0)$ on $\Gamma$ for some $x_0\in \RM$.  Using \eqref{eqn:v}, it follows that
\[
\int_0^{T((c-1)/c)^{1/2}}v^2(x)dx=\sqrt{2}\int_{\tilde{u}_{-}}^{\tilde{u}_{+}}\frac{u^2\;du}{\sqrt{E+v^2-\frac{1}{(p+1)(p+2)}v^{p+2}+av}}
\]
where $\tilde{u}_{\pm}$ are the roots of $E+v^2-\frac{1}{(p+1)(p+2)}v^{p+2}+av=0$ satisfying the original
hypothesis of the roots $u_{\pm}$ of $E-V(u;a,c)=0$.
Since $\int_{-T((c-1)/c)^{1/2}/2}^{T((c-1)/c)^{1/2}/2} v^2(x)dx=\mathcal{O}(1)$ as $(a,E)\to(0,0)$, the dominated convergence theorem along with the fact
that $\int_{-T((c-1)/c)^{1/2}/2}^{T((c-1)/c)^{1/2}/2}v^2(x)dx$ is a $C^1$ function of $a$ and $E$ implies that
%By requiring $x=0$ be an absolute max of $v$, thus
%breaking the translation invariance of \eqref{eqn:v}, it follows by the dominated convergence theorem
%\[
%v(x;a,E)\chi_{\left[-T((c-1)/c)^{1/2}/2,T((c-1)/c)^{1/2}/2\right]}(x)\to\left(\frac{(p+2)(p+1)}{2}\right)^{1/p}\sech^{2/p}\left(\frac{px}{2}\right)
%\]
%uniformly on $\RM$ as $(a,E)\to(0,0)$.  Therefore, in the solitary wave limit we have
%$(a,E)\to(0,0)$
%we have
\begin{eqnarray*}
\int_{-T((c-1)/c)^{1/2}/2}^{T((c-1)/c)^{1/2}/2}v(x;a,E)^2dx=\left(\frac{(p+2)(p+1)}{2}\right)^{2/p}\frac{1}{p}I\left(\frac{4}{p}\right)+
                     \mathcal{O}(|a|+|E|).
%\int_{-T((c-1)/c)^{1/2}/2}^{T((c-1)/c)^{1/2}/2}v_x(x;a,E)^2dx&=&\frac{p}{4+p}\;I\left(\frac{4}{p}\right)+\mathcal{O}(|a|+|E|).
\end{eqnarray*}
Similarly, it follows that
\[
\int_{-T((c-1)/c)^{1/2}/2}^{T((c-1)/c)^{1/2}/2}v_x(x;a,E)^2dx=\left(\frac{(p+2)(p+1)}{2}\right)^{2/p}
%                                 \frac{p}{4+p}\;I\left(\frac{4}{p}\right)+\mathcal{O}(|a|+|E|).
                                  \frac{1}{4+p}\;I\left(\frac{4}{p}\right)+\mathcal{O}(|a|+|E|).
\]
Using \eqref{scaling:gbbm}, we now have
\begin{align*}
p\left(\frac{(p+2)(p+1)}{2}\right)^{-2/p}&\int_{-T/2}^{T/2}\left(u(x;a,E,c)^2+u_x(x;a,E,c)^2\right)dx\\
&=(c-1)^{2/p-1/2}c^{1/2}I\left(\frac{4}{p}\right)
+(c-1)^{2/p+1/2}c^{-1/2}\frac{p}{4+p}\;I\left(\frac{4}{p}\right)\\
&+\mathcal{O}(|a|+|E|)
\end{align*}
%Then we have
%\begin{eqnarray*}
%\int_0^T u(x;a,E,c)^2dx&=&(c-1)^{2/p-1/2}c^{1/2}\int_0^{T((c-1)/c)^{1/2}}v\left(x;\frac{a}{c^{1+1/p}},\frac{E}{c^{1+2/p}}\right)^2dx\\
%\int_0^T u(x;a,E,c)^2dx&=&(c-1)^{2/p+1/2}c^{-1/2}\int_0^{T((c-1)/c)^{1/2}}v_x\left(x;\frac{a}{c^{1+1/p}},\frac{E}{c^{1+2/p}}\right)^2dx
%\end{eqnarray*}
as $a,E\to 0$, and hence it follows by differentiation that
\begin{align*}
p\left(\frac{(p+2)(p+1)}{2}\right)^{-2/p}&\frac{\partial}{\partial c}\int_{-T/2}^{T/2}\left(u(x;a,E,c)^2+u_x(x;a,E,c)^2\right)dx=\\
          &\frac{(c-1)^{2/p-1/2}c^{1/2}I\left(\frac{4}{p}\right)}{2pc(c-1)}
                        \left(4c-p+\frac{(4c+p)(c-1)p}{(4+p)c}\right)\\
%&+&2T_c u_{-}^2+\mathcal{O}(|a|+|E|)
&+\mathcal{O}(|a|+|E|)
\end{align*}
as claimed, where we have used that $T_c u_{-}^2=\mathcal{O}(|a|+|E|)$.
The lemma now follows by solving the quadratic equation $(4+p)c(4c-p)+(4c+p)(c-1)p=0$ for $c$
and recalling the restriction that $c>1$.
\end{proof}

The proof of Theorem \ref{solitarylim} is now complete by Lemmas \ref{lem:mass} and \ref{lem:momentum}.
As a consequence, the finite-wavelength instability index $\{T,M,P\}_{a,E,c}$ seems to be
a somewhat natural generalization of the solitary wave stability index, in the sense that the well known stability properties of solitary waves
are recovered in a long-wavelength limit.  Moreover, this gives an extension of the results of Gardner
in the case of generalized BBM equation by proving the marginally stable eigenvalue of the solitary
wave at the origin contributes to modulational instabilities of nearby periodic waves when ever the solitary
wave is unstable.

Finally, we wish to make an interesting comment concerning the elliptic function solutions of the modified BBM (mBBM) equation
with $f(u)=u^3$.  In the special case in our theory of $a=0$, it turns out that all periodic traveling wave solutions
$u(x;0,E,c)$ can be expressed in terms of Jacobi elliptic functions: when $E>0$ such solutions correspond to
cnoidal waves, expressible in terms of the $\cn(x,k)$ function, while when $E<0$ such solutions correspond
to dnoidal waves, expressible in terms of the function $\dn(x,k)$.  By Theorem \ref{solitarylim}, it follows that
the dnoidal waves with $1-k$ sufficiently small(corresponding to $0<-E\ll 1$) are always
modulationally stable and nonlinearly (orbitally) stable to perturbations with the same periodic structure.
What is unclear from our above analysis is whether the cnoidal waves with $k$ sufficiently close to $1$ (corresponding
to $0<E\ll 1$) are exhibit a sense of stability: such waves correspond to periodic orbits outside the seperatrix in phase
space, and hence Theorem \ref{solitarylim} does not apply to such solutions.  Moreover, such waves do not
approach any particular solitary wave in any uniform way, even on compact subsets, and hence
it is not clear if the stability of the limiting solitary waves at all influences the cnoidal waves.
However, notice that for $k$ sufficiently
close to unity one must have $T_E<0$ and hence by the analysis in the proof of Theorem \ref{solitarylim} we have $\Delta<0$.
Therefore, the cnoidal waves of the mBBM with sufficiently long wavelength are modulationally unstable.
Similarly, in this solitary wave limit one has
\[
\lim_{E\to 0^+}{\rm sign}\left(\{T,M,P\}_{a,E,c}(0,E,c)\right)=\lim_{E\to 0^+}{\rm sign}\left(M_a(0,E,c)\right)
\]
and hence one has spectral instability to periodic perturbations if $M_a(0,E,c)<0$ for $E>0$ sufficiently small.
Using the complex analytic calculations of Bronski, Johnson, and Kapitula \cite{BrJK}, we can solve the
corresponding Picard-Fuchs system and see that
\begin{equation}
%\lim_{E\to 0^+}\frac{-(c-1){\rm disc}(R(u;0,E,c))M_a(0,E,c)}{T(0,E,c)}=1.\label{MassSolitary}
\lim_{E\to 0^+}\frac{{\rm disc}(R(u;0,E,c))M_a(0,E,c)}{(c-1)T(0,E,c)}=1.\label{MassSolitary}
\end{equation}
See the appendix for more details of this calculation.
Since ${\rm disc}(R(u;0,E,c))<0$ for such solutions, we find that $M_a<0$.  Thus, the cnoidal wave solutions
of the mBBM equation of sufficiently large wavelength are exponentially unstable to perturbations with the same
periodic structure.  In particular, it seems like the index ${\rm sign}(T_E)$ serves as Maslov index in this
problem: when $T_E>0$ it is $P_c<0$ which signals the stability near the solitary wave, while it is $P_c>0$
when\footnote{While we have not actually proven this here (we only showed this is the case for cnoidal solutions of mBBM), we suspect
this is the case for all nonlinearities of the form $f(u)=u^{2k+1}$ for $k\in\mathbb{N}$.  For examples where this phenomenon
is studied in more cases see \cite{BrJK}.}  $T_E<0$.
This index arises naturally in the solitary wave setting: see \cite{Bridges:Maslov} for details
and discussion.  In particular, notice that in the solitary wave setting of Pego and Weinstein and in the
long wavelength analysis of Gardner, the Maslov index always has the same sign: however, as seen here this
is not the case when considering the full family of periodic traveling waves of the gBBM.

\section{Concluding Remarks}

In this paper, we considered the stability of periodic traveling wave solutions of the generalized
Benjamin-Bona-Mahony equation with respect to periodic and localized perturbations.  Two stability indices
were introduced of the full four parameter family of periodic traveling waves.
The first, which is given by the Jacobian of the map
between the constants of integration of the traveling wave ordinary
differential equation and the conserved quantities of the partial differential
equation restricted to the class of periodic traveling wave solutions,
serves to count (modulo $2$) the number of periodic
eigenvalues along the real axis. This is, in some sense, a natural
generalization of the analogous calculation for the solitary wave solutions,
and reduces to this is the solitary wave limit. The second index, which arises
as the discriminant of a cubic which governs the normal form of the
linearized operator in a neighborhood of the origin, can also
be expressed in terms of the conserved quantities of the partial differential
equation and their derivatives with respect to the constants of integration
of the ordinary differential equation. This discriminant detects modulational
instabilities of the underlying periodic wave, i.e. bands of spectrum off
of the imaginary axis in an arbitrary neighborhood of the origin.
%which are connected to the origin.

In our calculations, we heavily used the fact that the ordinary differential equation defining the
traveling wave solutions of \eqref{gbbm} have sufficient first integrals, and as such is doubtlessly
related to the multi-symplectic formalism of Bridges \cite{B}.  Many of the ideas from this paper were recently
used by the author, in collaboration with Jared C. Bronski, in the analogous study of the spectral
stability of periodic traveling wave solutions of the generalized Korteweg-de Vries equation \cite{BrJ}.  Thus,
the general technique of this paper has been successful in two different cases.  These successes
gives an indication that a rather general modulational stability theory can be developed using
these techniques.  It would be quite interesting to compare the rigorous results from this paper to the formal
modulational stability predictions of Whitham's modulation theory \cite{W1}.  Moreover, the techniques in this paper
could eventually lead to a rigorous justification of the Whitham modulation equations for such dispersive equations.
%
%The techniques in this paper can be applied in a straightforward manner to any equation which admits
%periodic solutions, and whose linearization about such a solution takes the form
%\begin{equation}
%\mathcal{J}\mathcal{L}v=\mu\mathcal{D}v\label{generallinear}
%\end{equation}
%on $L^2(\RM)$, where $\mathcal{J}$ is skew symmetric, $\mathcal{L}$ is self adjoint, and $\mathcal{D}$ is invertible.

It is not known if our techniques can be extended to equations which are deficient in the number
of first integrals.  An example would be the generalized regularized Boussinesq equation
\[
u_{tt}-u_{xx}-\left(f(u)\right)_{xx}-u_{xxtt}=0.
\]
A straight forward calculation shows that although this equation admits a five parameter family of traveling wave solutions
with a {\it four} parameter submanifold of periodic solutions.  Although one can still build a four dimensional
basis of null-space of the corresponding linearized operator, it is not clear whether the resulting
stability indices can be directly related back to geometric information concerning the original
periodic traveling wave.
Such a result would be very interesting, as it would allow the use of such techniques to several more classes of partial
differential equations admitting traveling wave solutions.

{\bf Acknowledgements:} The author would like to thank Jared C. Bronski for his many useful
conversations in the early stages of this work.  Also, the author gratefully acknowledges
support from a National Science Foundation Postdoctoral Fellowship under
grant DMS-0902192.

\section{Appendix}

In this appendix, we outline the general complex analytic methods used to derive the identity
in equation \eqref{MassSolitary}.  To begin, consider \eqref{gbbm} with $f(u)=u^3$
and let $u(x;a,E,c)$ be a given periodic traveling wave.  To begin, we notice that
that in this case
%
% a general complex analytic methods in order to describe a method to compute the various Jacobians arising
%in the theory presented in the previous sections in the special case of a power law nonlinearity $f(u)=u^{p+1}$ for $p\in\mathbb{N}$.
%We then apply these general methods to study the stability of periodic traveling waves of the BBM equation ($f(u)=u^2$)
%and the focusing and defocusing modified BBM (mBBM) equations ($f(u)=\pm u^3$, respectively).  The following method
%was employed by Bronski, Johnson, and Kapitula [BrJK] in the context of periodic traveling wave solutions of the
%generalized KdV equation.  The main observation is that for such nonlinearities the quantities
$T$, $M$, and $P$ can be
represented integrals on a Riemann surface corresponding to the classically allowed region for
$\sqrt{E-V(u;a,c)}$, where $V(u;a,c)=E+au+\left(\frac{c-1}{2}\right)u^2-u^4/4$.
In particular, defining $R(u;a,E,c):=E-V(u;a,c)$ we have the following integral representations:
\begin{align*}
T(a,E,c)&=\sqrt{\frac{c}{2}}\oint_{\Gamma}\frac{du}{\sqrt{R(u)}}\\
M(a,E,c)&=\sqrt{\frac{c}{2}}\oint_{\Gamma}\frac{u~du}{\sqrt{R(u)}}\\
P(a,E,c)&=\frac{1}{2}\left(\sqrt{\frac{c}{2}}\oint_{\Gamma}\frac{u^2~du}{\sqrt{R(u)}}+K\right)
\end{align*}
where $K$ is the classical action defined in \eqref{action}
and the integrals are taken around the classically allowed region corresponding to
the periodic orbit $u(x;a,E,c)$.
By differentiating the above relations with respect to the traveling wave parameters, we
have the following identities:
%In this case, we notice that
%we have the following integral representations for the derivatives of period with respect to the traveling
%wave parameters:
\begin{align*}
\nabla_{a,E,c}T(a,E,c)&=\left<-\frac{1}{2}\sqrt{\frac{c}{2}}\oint_{\Gamma}\frac{u~du}{R(u)^{3/2}},~
-\frac{1}{2}\sqrt{\frac{c}{2}}\oint_{\Gamma}\frac{du}{R(u)^{3/2}},~ -\frac{1}{4}\sqrt{\frac{c}{2}}\oint_{\Gamma}\frac{u^2~du}{R(u)^{3/2}}+\frac{1}{2c}T\right>\\
\nabla_{a,E,c}M(a,E,c)&=\left<-\frac{1}{2}\sqrt{\frac{c}{2}}\oint_{\Gamma}\frac{u^2~du}{R(u)^{3/2}},~
T_a,~%-\frac{1}{2}\sqrt{\frac{c}{2}}\oint_{\Gamma}\frac{u~du}{R(u)^{3/2}},
 -\frac{1}{4}\sqrt{\frac{c}{2}}\oint_{\Gamma}\frac{u^3~du}{R(u)^{3/2}}+\frac{1}{2c}M\right>\\
\nabla_{a,E,c}P(a,E,c)&=\left<M_c,~T_c,~-\frac{1}{8}\sqrt{\frac{c}{2}}\oint\frac{u^4~du}{R(u)^{3/2}}+\frac{1}{c}P-\frac{3}{4}K\right>
\end{align*}
In particular, we see that each of the above derivatives can be expressed as linear combinations of the
classical action $K$, the $k^{th}$ moment of the solution $u$
\[
\mu_k=\oint\frac{u^k~du}{\sqrt{R(u)}}
\]
and an integral of the form
\[
I_k=\oint\frac{u^k~du}{R(u)^{3/2}}.
\]
We now show that in the case of the mBBM equation, we can find a closed system of seven equations for the integrals $I_k$
in terms of the traveling wave parameters and the quantities $T$, $M$, $P$ and $K$.  To this end, first notice that for any
integer $j\geq 1$ one has the identity
\begin{align*}
\mu_j&=\oint\frac{u^j~du}{\sqrt{R(u)}}=\oint\frac{u^jR(u)du}{R(u)^{3/2}}\\
&=E~I_j+a~I_{j+1}+\left(\frac{c-1}{2}\right)I_{j+2}-\frac{1}{4}I_{j+4}.
\end{align*}
Moreover, integration by parts yields the relation
\begin{align*}
2j\mu_{j-1}&\oint\frac{u^{j-1}~du}{\sqrt{R(u)}}=\oint\frac{u^j R'(u)du}{R(u)^{3/2}}\\
&=aI_j+(c-1)I_{j+1}-I_{j+2}.
\end{align*}
This yields a linear system of seven equations in seven unknowns $\{I_{j}\}_{j=0}^{6}$ known as the Picard-Fuchs system:
\[
\left(
  \begin{array}{ccccccc}
    E & a & \frac{c-1}{2} & 0 & -\frac{1}{4} & 0 & 0\\
    0 & E & a & \frac{c-1}{2} & 0 & -\frac{1}{4} & 0 \\
    0 & 0 & E & a & \frac{c-1}{2} & 0 & -\frac{1}{4}\\
    a & (c-1) & 0 & -1 & 0 & 0 & 0 \\
    0 & a & (c-1) & 0 & -1 & 0 & 0 \\
    0 & 0 & a & (c-1) & 0 & -1 & 0 \\
    0 & 0 & 0 & a & (c-1) & 0 & -1
  \end{array}
\right)
\left(
  \begin{array}{c}
    I_0 \\
    I_1 \\
    I_2 \\
    I_3 \\
    I_4 \\
    I_5\\
    I_6\\
  \end{array}
\right)
=
\left(
  \begin{array}{c}
    \sqrt{\frac{2}{c}}T \\
    \sqrt{\frac{2}{c}}M \\
    \mu_2\\
    0 \\
    2\sqrt{\frac{2}{c}}T \\
    4\sqrt{\frac{2}{c}}M \\
    6\mu_2
  \end{array}
\right).
\]
The matrix which arises in the above linear system is known as the Sylvester matrix of the polynomials
$R(u)$ and $R'(u)$.  In commutative algebra, a standard result is that the determinant of the Sylvester
matrix of two polynomials $P(u)$ and $Q(u)$ vanishes if and only if they have a common root.  If we now
restrict ourselves to $a=0$, it follows that the above matrix is invertible for all periodic traveling wave solutions $u(x;0,E,c)$
of the corresponding mBBM equation.  %In particular, one can solve the above system and use the above relations between $I_j$
In particular, we can solve the above Picard-Fuchs system and use the fact that $M_a=-\frac{1}{2}\sqrt{\frac{c}{2}}I_2$
to see that
\[
\lim_{E\to 0^+}\frac{{\rm disc}(R(u;0,E,c))M_a(0,E,c)}{(c-1)T(0,E,c)}=1
\]
%-2\sqrt{2}(c-1)T(0,E,c)
as claimed.

\bibliography{GBBMbib}

\end{document}